\theoremstyle{plain}
\newtheorem{teo}{Theorem}[section]
\newtheorem{prop}[teo]{Proposition}
\newtheorem{pro}[teo]{Problem}
\newtheorem{lemma}[teo]{Lemma}
\newtheorem{cor}[teo]{Corollary}
\newtheorem*{nteo*}{\namedthmname}
\newtheorem{que}[teo]{Question}
\newenvironment{nteo}[1]
  {\newcommand\namedthmname{#1}\begin{nteo*}}
  {\end{nteo*}}
\newtheorem*{ncor*}{\namedcorname}  
\newenvironment{ncor}[1]
  {\newcommand\namedcorname{#1}\begin{ncor*}}
  {\end{ncor*}}
\theoremstyle{definition}
\newtheorem{defin}[teo]{Definition}
\newtheorem{rem}[teo]{Remark}
\newtheorem{exa}[teo]{Example}
\theoremstyle{remark}
\newtheorem{notation}[teo]{Notation}
\DeclareMathOperator{\Nc}{\mathfrak{N}}
\DeclareMathOperator{\wD}{\mathfrak{wD}}
\DeclareMathOperator{\G}{\mathfrak{G}_\delta}
\DeclareMathOperator{\GG}{\mathfrak{G}_{\delta\sigma}}
\DeclareMathOperator{\F}{\mathfrak{F}_\sigma}
\DeclareMathOperator{\FF}{\mathfrak{F}_{\sigma\delta}}
\DeclareMathOperator{\sPol}{\mathfrak{Pol}}
\DeclareMathOperator{\CMC}{\mathfrak{CmC}}
\DeclareMathOperator{\CH}{\mathfrak{Char}}
\DeclareMathOperator{\ACH}{\mathfrak{a-Char}}
\DeclareMathOperator{\tu}{t_{\us}}
\DeclareMathOperator{\ttu}{t\! t_{\us}}
\DeclareMathOperator{\sv}{s_{\vs}}
\DeclareMathOperator{\su}{s_{\us}}
\DeclareMathOperator{\diam}{diam}
\DeclareMathOperator{\supp}{\mbox{supp}}
\newcommand{\A}{\mathcal{A}}
\newcommand{\T}{\mathbb{T}}
\newcommand{\PP}{\mathbb{P}}
\newcommand{\Z}{\mathbb{Z}}
\newcommand{\Q}{\mathbb{Q}}
\newcommand{\R}{\mathbb{R}}
\newcommand{\N}{\mathbb{N}}
\newcommand{\us}{\mathbf{u}}
\newcommand{\cc}{\mathfrak{c}}
\newcommand{\vs}{\mathbf{v}}
\newcommand{\ep}{\varepsilon}
\newcommand{\SUX}{\supp_\us(x)}
\newcommand{\tTu}{\tu(\T)}
\newcommand{\ttTu}{\ttu(\T)}
\newcommand{\lb}{\left\|}
\newcommand{\rb}{\right\|}
\newcommand{\NB}{}
\begin{document}

%\begin{frontmatter}

\title{On the Borel Complexity of Characterized Subgroups
\thanks{MSC: 22C05, 43A46, 54H05.
\endgraf
%\begin
{\sl Keywords}: characterized subgroup, compact group, circle group, 
%convergent sequence, 
sequence of characters, sequence of integers, Borel hierarchy, Polishable Subgroup
\endgraf
}
% or \MSC[2008] code \sep code (2000 is the default)
%%
%\end{keyword}
}

\author{Dikran Dikranjan\thanks{The first named author gratefully acknowledges the FY2013 Long-term visitor grant~L13710 by the Japan Society for the Promotion of Science (JSPS).} and Daniele Impieri}
%\ead{dikran.dikranjan@uniud.it} 

%\author{Daniele Impieri\corref{cor}}
%\ead{daniele.impieri@uniud.it}
%\address{Dipartimento di Matematica e Informatica, Universit\`{a} di Udine, Via delle Scienze  206, 33100 Udine, Italy.}

\maketitle

\begin{abstract}
 In a compact abelian group $X$, a subgroup $H$ is called characterized  if there exists a sequence of characters $\vs=(v_n)$ of $X$ such that $H=\{x\in X:v_n(x)\to 0 \text{ in } \T\}$. Gabriyelyan proved for $X=\T$, that the characterized subgroup \NB $\{x\in\T:n!x\to 0 \text{ in }\T\}$ is not an $F_\sigma$-set. In this paper, we obtain a complete description of the $F_\sigma$-subgroups of $\T$ characterized by sequences of integers $\vs=(v_n)$ such that $v_n|v_{n+1}$ for all $n\in\N$ showing that these are exactly the countable \NB \  subgroups of $\T$. Moreover, in the general setting of compact metrizable abelian groups, we give a new point of view to study the Borel complexity of characterized subgroups in terms of appropriate test-topologies defined on the group $X$.
\end{abstract}

%\begin{frontmatter}
%%
%\begin{keyword}
%Characterized Subgroup \sep Compact Group \sep Circle Group \sep Convergent Sequence, Sequence of Characters, Sequence of integers, Borel Hierarchy, Polishable Subgroup
%
%\MSC[2010] 22C05 \sep 43A46 \sep 54H05
%%% or \MSC[2008] code \sep code (2000 is the default)
%%
%\end{keyword}
%
%\end{frontmatter}

%\pagebreak
  
We denote by $\T=\R/\Z$ the additive circle group and by $\varphi: \R \to \T$ is the canonical homomorphism; furthermore, 
we identify an element $x\in\T$ with its unique pre-image in $[0,1)$ under $\varphi$. We use $\lb \ \rb$ for the standard norm on the torus; namely, if $x\in\T$, then $\lb x\rb=x$ if $x<\frac{1}{2}$,  otherwise $\lb x\rb=1-x$. The metric generated by $\| \ \|$ will be denoted by $d$. 

For an abelian group $G$ we denote by $t(G)$ the torsion subgroup of $G$.  The  closure of a subset $M$ of a topological space will be denoted by $\overline{M}$.   

For a topological abelian group $X$ a \emph{character} of $X$ is a continuous homomorphism $\chi:X\to\T$. 
	Denote by $\widehat{X}$ the group of all characters of $X$, that is the Pontryagin dual of $X$.

\section{Introduction}

  \subsection{Some History and Basic Definitions}
		
\begin{defin} Let $\us=(u_n)_{n\in\N}$ be a sequence of integers, $X$ be a compact abelian group and $\vs=(v_n)_{n\in\N}$ be a sequence of characters of $X$.  The following subsets of $X$ 
are subgroups of $X$:
	  \begin{equation*}
	  \tu(X)=\left\{x\in X:u_nx\to0 \text{ in } X \right\} \text{ and }  \sv(X):=\left\{x\in X:v_n(x)\to0 \text{ in }\T\right\}.
	  \end{equation*}
	\end{defin}

	 Since $\widehat{\T}=\Z$, the two notions defined above coincide in the case of $X=\T$, i.e $\tTu=\sv(\T)$ where for all $n\in\N, v_n:x\mapsto u_nx$.	
	
	\begin{defin} {\rm (\cite{DMT})} Let $X$ be a compact abelian group. A subgroup $H$ of $X$ is called \emph{characterized} if there exists a sequence of characters $\vs$ such that $H=\sv(X)$. We also say that $\vs$ \emph{characterizes} $H$. 
	\end{defin}
	 
Let $\CH(X)$ be the family of all characterized subgroups of $X$. The assignment $\widehat{X}^\N \ni \us \mapsto {\su}(X) \in \CH(X)$ gives rise to a relevant equivalence relation $\sim$ in $\widehat{X}^\N$ defined by $\us \sim \vs$ when $\su(X) = \sv(X)$. 

Since $\widehat \T = \Z$, $H\le\T$ is characterized if and only if there exists $\us\in\Z^{\N}$ such that $H=\tTu$. 
The sequence $(q_n)$ of ratios of $\us$ is defined by $q_n=\frac{u_n}{u_{n-1}}$ for $n>0$ and $q_0=u_{0}$. 
If $\us$ does not have any constant subsequences there exists  a strictly increasing sequence of non-negative integers $\us^* \sim \us$ (see \cite[Proposition 2.5]{BDMW1}).  

The subgroup $\tau_\us(\R) := \varphi^{-1}(\tTu)$ can be conveniently used to study $\tTu$. This kind of subgroups of $\R$ and their subsets were studied much earlier, going back to Arbault \cite{Arb} in relation to trigonometric series and then by Borel \cite{Bo1,Bo2}. Some steps toward this direction can be found in the past in \cite{Egg}, where 
following theorem can be found: 

\begin{teo}[Eggleston]\label{TEgg}
  Let $\us=(u_n)$ be a sequence of integers.
  \begin{itemize}
   \item If the sequence of ratios $(q_n)$
   %\frac{u_n}{u_{n-1}})$ 
   is bounded, then $\tTu$ is countable.
   \item If the sequence of ratios  $(q_n)$
   %$(\frac{u_n}{u_{n-1}})$
    converges to infinity, then $\tTu$ is uncountable.
  \end{itemize}
\end{teo}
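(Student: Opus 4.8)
The plan is to put $\us$ into a convenient normal form and then treat the two clauses separately. Since the hypotheses presuppose that the ratios $q_n=u_n/u_{n-1}$ are defined, we have $u_n\neq0$, and a bounded or eventually constant sequence of nonzero integers has a nonzero constant subsequence $u_{n_k}=c$, which already confines $\tTu$ to the finite set $\{x:cx=0\}$; so, after passing to the equivalent $\us^*$ and discarding such degenerate cases, I would assume $\us=(u_n)$ is a strictly increasing sequence of positive integers with $u_n\to\infty$ (the sign bookkeeping is routine via the subgroup structure). For $x\in\T$ write $u_nx=p_n+\eta_n$ with $p_n\in\Z$ and $\eta_n\in(-\tfrac12,\tfrac12]$, so that $\lb u_nx\rb=|\eta_n|$ and $x\in\tTu$ exactly when $\eta_n\to0$. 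Multiplying $u_nx=p_n+\eta_n$ by $q_{n+1}=u_{n+1}/u_n$ yields the basic engine
\begin{equation*}
\eta_{n+1}=q_{n+1}\eta_n-\frac{N_n}{u_n},\qquad N_n:=u_np_{n+1}-u_{n+1}p_n\in\Z.
\end{equation*}

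For the bounded clause, suppose $|q_n|\le M$ and fix $N$ with $|\eta_n|<\tfrac1{2M}$ for $n\ge N$. In the case of primary interest here, $u_n\mid u_{n+1}$ (so every $q_n$ is an integer), the integer $q_{n+1}p_n-p_{n+1}$ satisfies $|q_{n+1}p_n-p_{n+1}|\le|\eta_{n+1}|+q_{n+1}|\eta_n|<1$, hence vanishes, giving $\eta_{n+1}=q_{n+1}\eta_n$ exactly for $n\ge N$. Iterating, $\eta_n=(u_n/u_N)\eta_N$, and since $u_n/u_N\to\infty$ while $|\eta_n|\le\tfrac12$ this forces $\eta_N=0$, i.e. $x\in u_N^{-1}\Z$. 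Thus $\tTu\subseteq\bigcup_N u_N^{-1}\Z$, a countable set. For general (non-integer) bounded ratios one cannot conclude $x\in\Q$ — for the Fibonacci sequence $\tfrac{\sqrt5-1}{2}\in\tTu$ is irrational — and the argument must instead show that the decaying solutions $(\eta_n)$ of the recursion are parametrised by countably much data.

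For the unbounded clause, suppose $q_n\to\infty$; I would exhibit a continuum inside $\tTu$ by a \emph{Cantor scheme} of nested closed arcs processed index by index. For fixed $j$ and $\delta>0$ the set $\{x:\lb u_jx\rb\le\delta\}$ is a union of $u_j$ arcs of length $2\delta/u_j$ spaced $1/u_j$ apart. Starting from $J=\T$, at step $j$ I would replace the current arc $J$ by a sub-arc on which $\lb u_jx\rb\le\delta^{(j)}$, with bounds $\delta^{(j)}\downarrow0$; since $J$ has length $\approx 2\delta^{(j-1)}/u_{j-1}$ and the step-$j$ arcs have period $1/u_j=1/(q_ju_{j-1})$, the inequality $q_j\ge 1/(2\delta^{(j-1)})$ — valid for all large $j$ because $q_j\to\infty$ — guarantees that $J$ meets at least one, and when $q_j\ge 2/\delta^{(j-1)}$ at least two, such arcs. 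Splitting at the infinitely many indices of the latter type produces a full scheme $\{J_s:s\in\{0,1\}^{<\omega}\}$ with $J_{s0},J_{s1}\subseteq J_s$ disjoint. For $b\in\{0,1\}^\N$ let $x_b$ be the unique point of $\bigcap_k J_{b_1\cdots b_k}$ (the lengths tend to $0$); distinct $b$ give distinct $x_b$, and each satisfies $\lb u_jx_b\rb\le\delta^{(j)}\to0$, so $x_b\in\tTu$ and $|\tTu|=\cc$.

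The hard part will be the refinement step in each direction. In the second clause it is guaranteeing that a single arc can be \emph{simultaneously} tightened toward $\delta^{(j)}\to0$ and split, which rests entirely on the room created by $q_j\to\infty$ through the inequality $q_j\gtrsim 1/\delta^{(j-1)}$; in the first clause it is upgrading the clean divisible argument to arbitrary bounded ratios, where irrational members of $\tTu$ occur and one must count the admissible decaying solutions of the recursion directly.
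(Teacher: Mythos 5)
The paper offers no proof of this statement at all: it is quoted as Eggleston's classical theorem with a citation to [Egg], so there is nothing internal to compare against and your proposal must stand on its own. Your unbounded clause does stand: the Cantor scheme of nested closed arcs is a standard perfect-set construction, and the bookkeeping you identify as the crux (choosing $\delta^{(j)}\downarrow 0$ slowly enough that $q_j\ge 2/\delta^{(j-1)}$ for all large $j$, e.g. $\delta^{(j)}=2\sup_{k>j}q_k^{-1/2}$) genuinely works because $q_n\to\infty$; note also that in this clause $q_n\to\infty$ forces $|u_n|$ to be eventually strictly increasing, so your normalization discards only finitely many terms and is harmless, whereas passing to genuine subsequences would only be legitimate in the countability clause (it enlarges $\tTu$).

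The genuine gap is in the bounded clause. The theorem asserts countability for an \emph{arbitrary} integer sequence with bounded ratios, but your argument that kills the integer $q_{n+1}p_n-p_{n+1}$ uses that it \emph{is} an integer, i.e. the divisibility $u_n\mid u_{n+1}$; for non-integer ratios your closing sentence ("one must count the admissible decaying solutions of the recursion directly") is a restatement of the claim, not a proof, so as written you have only proved the a-sequence case. The gap is real but repairable inside your own framework, with no divisibility: suppose $|q_n|\le M$ and fix $N$ with $|\eta_n|<\frac{1}{4M}$ for all $n\ge N$. From $u_{n+1}x=q_{n+1}p_n+q_{n+1}\eta_n$ you get $|u_{n+1}x-q_{n+1}p_n|\le M\cdot\frac{1}{4M}=\frac14$, hence $|p_{n+1}-q_{n+1}p_n|\le\frac14+|\eta_{n+1}|<\frac12$, so $p_{n+1}$ is the \emph{unique} integer at distance less than $\frac12$ from $q_{n+1}p_n$ --- it is determined by $p_n$ and the sequence $\us$ alone, whether or not $q_{n+1}\in\Z$. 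Iterating, the whole tail $(p_n)_{n\ge N}$ is determined by the pair $(N,p_N)$, and since $x=\lim_n p_n/u_n$ (as $u_n\to\infty$ and $\eta_n$ is bounded), the map $x\mapsto(N_x,p_{N_x})$ is injective from $\tTu$ into $\N\times\Z$; this yields countability and is consistent with your Fibonacci example, where the irrational $\frac{\sqrt5-1}{2}$ lies in $\tTu$ but $\tTu$ is still countable. Without this (short) supplement, the bounded clause of the theorem is not established.
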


The subgroup $  \tu(X)$ appeared also as a generalization of \emph{torsion subgroup}  and \emph{$p$-torsion subgroup} of an abelian group for a prime number $p$. Armacost in \cite{Arm}, using different names, introduced for a topological abelian group the notions 
of \emph{topologically torsion subgroup} that is the subgroup of all elements $x$ such that $n!x\to0$ and \emph{topologically $p$-torsion subgroup} that is the subgroup of all elements $x$ such that $p^nx\to0$. Armacost  \cite{Arm} gave a description of $\tTu$ for $\us=(p^n)$ for a prime $p$ and posed the problem to describe $\tTu$ for $\us=(n!)$. These two notions were unified in \cite{DPS} by a special class of sequences of integers that will play an important role for this paper as well: 

\begin{defin} 
  A strictly increasing sequence of integers $\us = (u_n)$ is called {\em arithmetic} (or briefly, an {\em a-sequence}) if $u_n|u_{n+1}$ for all $n\in\N$. 
  \end{defin}

 For $X=\T$, let $\ACH(\T)$ be the class of all subgroups of $\T$ that are characterized by a-sequences.

The first results on $\ACH(\T)$ can be found already in  \cite[\S 4.4.2]{DPS}, providing, among others a partial answer to 
Armacost problem. A complete solution of Armacost problem was given in  \cite{DdS}, along with an attempt to describe $\ACH(\T)$. The gap
left in \cite{DdS} was filled in \cite{DI}, where a complete description of $\ACH(\T)$
can be found in (see \S \ref{secaseq}). 
%We show in Theorem F that as far as subgroups characterized by a-sequences are concerned, one can prove much more than the mere non-inclusion $\CH(\T)  \not \subseteq  \F(\T)$ established in Theorem \ref{teoDG1}. Namely, \emph{the only proper $F_\sigma$-subgroups of $\T$, characterized by an a-sequence are the countable ones}. Still for a-sequences we will find an explicit subgroup that witnesses the separability of $(\tTu,\pi(\tTu))$, namely the torsion subgroup $t(\tTu)$. %Moreover for $X=\T$ we will completely 

Some relevant results related to $\CH(\T)$ in the setting of number theory were obtained in \cite{B,BDS}.
The notion of a characterized subgroup was generalized, from the case of subgroups of $\T$  to the general case of topological abelian groups in \cite{DMT}.
 For more recent contributions to this field see \cite{DK,DG,DGT,BDMW1,BDMW,DH,Ga4,Ga1,Ga3,Ga2,N}. 
 
\subsection{About Borel Complexity of Characterized Subgroups}

 The following general problem has been studied in \cite{DG}.

\begin{pro}\label{probBor} Study the Borel complexity of $\sv(X)$ for a compact abelian group $X$.
\end{pro}

 \begin{notation}\label{nsub} Let us recall the first six classes of the Borel hierarchy.
  \begin{equation*}
	\begin{matrix}
		& \{\text{open sets}\} &\kern-0.5em\subseteq & \kern-0.5em\G=\{G_\delta\text{-sets}\} &\kern-0.5em\subseteq &\kern-0.5em\GG=\{\text{countable unions of $G_\delta$-sets}\}&\\
		& \{\text{closed sets}\} &\kern-0.5em \subseteq & \kern-0.5em\F=\{F_\sigma\text{-sets}\}&\kern-0.5em\subseteq &\kern-0.5em\FF=\{\text{countable intersections of $F_\sigma$-sets}\}&
	\end{matrix}
\end{equation*}
 
 Note that if $X$ is metrizable, then the diagonal inclusions hold too. More precisely, \NB 
 $$
 \{\text{open sets}\}\subseteq\F\subseteq\GG\ \ \mbox{ and }\ \ \{\text{closed sets}\}\subseteq\G\subseteq\FF.
 $$ 
 Moreover, if $X$ is infinite, then all the inclusions listed above are proper. In this paper we are interested in Borel subgroups, hence we shall use the above notation for the relative class of subgroups instead of sets. By Borel complexity of characterized subgroups we mean the study of the classes of the Borel hierarchy which these groups belong to.
\end{notation}

The following remark provides an upper bound for the Borel complexity of characterized subgroups.

\begin{rem}\label{rem1} Every characterized subgroup $H$  of a compact abelian group $X$ is an $F_{\sigma\delta}$-subgroup of $X$, and hence $H$ is a Borel subset of $X$. Indeed, if $H$ is characterized by a sequence $\vs$, this fact directly follows from the equality
  \begin{equation*}
  \sv(X) = \bigcap_{0< M<\omega} \bigcup_{m} \left( \bigcap_{n>m} S_{n,M} \right), \mbox{ where }  S_{n,M} := \left\{ x\in X : \lb v_n(x)\rb \leq \frac{1}{M} \right\}.
\end{equation*}
\end{rem}
	
The characterized subgroups are Borel by the previous remark. Hence, if \NB $X$ is a compact metrizable abelian group and $H\in\CH(X)$, then either $|H|=\aleph_0$ or $|H|=\cc$.

\begin{rem}\label{Rzeromeasure}
According to \cite[Lemma 3.10]{CTW}, if $X$ is an infinite compact abelian group and $\vs\in \widehat{X}^\N$ contains a faithfully indexed subsequence, then $\sv(X)$ has zero Haar measure, so \NB the  index $[X:\sv(X)]$ is uncountable, as $X$ has measure 1. 
\end{rem}

It was proved in \cite{DK} and independently in \cite{BSW}, that all countable subgroups of a compact metrizable abelian group are characterized.

\begin{teo} {\rm (\cite{DK,BSW})} \label{ThDK} Let $X$ be a compact metrizable abelian group. Every countable subgroups of $X$ is characterizable.
\end{teo}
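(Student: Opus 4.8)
The plan is to reduce the statement to the case of a \emph{dense} subgroup and then to build the characterizing sequence by recursion. First I would record two elementary closure properties. For any closed subgroup $K\le X$ the subgroup $K$ itself is characterized: since $X$ is metrizable, $\widehat X$ is countable, and if we enumerate the annihilator $K^{\perp}=\{\chi\in\widehat X:\chi|_K=0\}$ with each of its (countably many) elements repeated infinitely often as a sequence $(w_n)$, then $\{x\in X:w_n(x)\to 0\}=\bigcap_{\chi\in K^{\perp}}\ker\chi=(K^{\perp})^{\perp}=K$, the last equality holding because $K$ is closed. In particular $\{0\}$ and $\overline H$ are characterized. Second, $\CH(X)$ is closed under finite intersections: if $H_1,H_2$ are characterized by $(v^1_n),(v^2_n)$, then the interleaved sequence $v^1_1,v^2_1,v^1_2,v^2_2,\dots$ converges to $0$ at $x$ exactly when both subsequences do, so it characterizes $H_1\cap H_2$.

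Using these, I would reduce to the dense case. Put $K=\overline H$, a compact metrizable abelian group in which $H$ is dense. Suppose we can characterize $H$ inside $K$ by some $(w_n)\subseteq\widehat K$. Since the restriction map $\widehat X\to\widehat K$ is surjective (every character of a closed subgroup extends), lift each $w_n$ to $v_n\in\widehat X$; then $v_n$ and $w_n$ agree on $K$, so $\{x\in K:v_n(x)\to 0\}=H$ and hence $\sv(X)\cap K=H$. Interleaving $(v_n)$ with a sequence characterizing the closed subgroup $K$ intersects the two convergence sets and yields exactly $H$. Thus it suffices to treat the case in which $H$ is dense in $X$ (so $X$ is infinite and $H^{\perp}=0$).

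For dense $H$, I would enumerate $H=\{h_k:k\in\N\}$ and construct $(v_n)\subseteq\widehat X$ recursively so that at stage $n$ we have $\lb v_n(h_k)\rb<\tfrac1n$ for all $k\le n$. This is possible because the set of admissible characters at stage $n$ is infinite: either the annihilator $\langle h_1,\dots,h_n\rangle^{\perp}$ is infinite, or else the image of the evaluation homomorphism $\widehat X\to\T^{n}$ is an infinite subgroup of the compact group $\T^{n}$, which therefore accumulates at $0$ and meets the $\tfrac1n$-box around $0$ infinitely often. The condition $\lb v_n(h_k)\rb<\tfrac1n$ guarantees $v_n(h_k)\to 0$ for every fixed $k$, i.e. $H\subseteq\sv(X)$.

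The hard part will be the reverse inclusion $\sv(X)\subseteq H$: one must arrange $v_n(x)\not\to 0$ for every $x\notin H$, and there are continuum-many such $x$ while only countably many characters are available. Here $\sv(X)$ is a subgroup containing $H$, hence a union of cosets of $H$, so the goal is to destroy convergence on every nontrivial coset. A per-point argument does not suffice: for each fixed $x\notin H$ the freedom left at each stage lets one keep $\lb v_n(x)\rb$ bounded away from $0$ infinitely often (so the sequences failing at a single $x$ form a meager, resp.\ null, set), but the union of these exceptional sets over all $x\notin H$ is uncountable and neither a category nor a measure argument controls it simultaneously. The crux, and the place where the group structure must be exploited, is to replace the uncountable family of forbidden points by countable data: I would interleave into the recursion a diagonalization that, at infinitely many stages, forces $v_n$ to be large on a carefully chosen finite set, organized—using the enumeration of $H$, a countable base of $X$, and a sufficiently fast growth of the chosen characters relative to $H$—so that the accumulated lower bounds are incompatible with convergence on any coset $x+H\ne H$. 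Carrying out this bookkeeping so that all cosets are excluded at once is the main obstacle; once it is done, the two families of constraints together give $\sv(X)=H$ in the dense case, and undoing the reductions proves the theorem.
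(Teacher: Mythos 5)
Your preliminary reductions are correct and cleanly done: closed subgroups are characterized by enumerating the annihilator with each element repeated infinitely often; $\CH(X)$ is stable under finite intersections by interleaving; lifting characters along the surjective restriction map $\widehat X\to\widehat{\overline H}$ reduces everything to the case of a dense countable subgroup; and the recursive choice of $v_n$ with $\lb v_n(h_k)\rb<\frac1n$ for $k\le n$ does give $H\subseteq\sv(X)$. (For comparison purposes, note that the paper itself contains no proof of Theorem \ref{ThDK}: it is quoted from \cite{DK} and \cite{BSW}, so the only question is whether your argument stands on its own.)

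It does not, because it stops exactly where the theorem begins. The inclusion $\sv(X)\subseteq H$ is the entire content of the result, and your text concedes that it is not established: the final paragraph describes a diagonalization that, \emph{once carried out}, would exclude every coset $x+H\neq H$, but it never specifies the finite sets on which the $v_n$ are forced to be large, never proves that such characters exist compatibly with the smallness constraints already imposed (being bounded away from $0$ at a prescribed point while being $\frac1n$-small on $h_1,\dots,h_n$ is a nontrivial duality/quasi-convexity question, not an automatic "freedom"; even your per-point claim is asserted without proof), and never explains how countably many such constraints defeat convergence on \emph{all} of the uncountably many cosets simultaneously. You correctly diagnose that a per-point diagonalization cannot work, but what you put in its place is a hope, not an argument. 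This missing step is precisely where the cited proofs need their key idea: in \cite{DK} a recursive block construction together with a compactness/duality lemma controlling the set of points at which \emph{every} character that is small on a given finite subset $F\subseteq H$ remains small (so that any point surviving all stages is forced into $H$), and in \cite{BSW} a different combinatorial mechanism with the same effect. Without an analogue of such a lemma, the construction as you describe it proves only $H\subseteq\sv(X)$, which is far weaker than the theorem.
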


 As pointed out in \cite{VN}, the metrizability in the above theorem is a {\em necessary} condition. On the other hand, it was proved in \cite{DG} that one can reduce the study of characterized subgroups of compact abelian groups to the case of compact metrizable abelian groups in the following sense: 

\begin{teo}{\rm \cite[Theorem B]{DG}}\label{ThDG} A subgroup $H$ of a compact abelian group $X$ is characterized if and only if $H$ contains a closed $G_\delta$-subgroup $K$ of $X$ such that $H/K$ is a characterized subgroup of the compact metrizable group $X/K$. \end{teo}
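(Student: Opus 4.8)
The plan is to prove the two implications separately, with the reverse implication a routine lifting argument and the forward implication carrying the real content. For sufficiency, suppose $H$ contains a closed $G_\delta$-subgroup $K$ with $H/K \in \CH(X/K)$. Since $K$ is a closed $G_\delta$-subgroup of the compact group $X$, the quotient $X/K$ is compact \emph{metrizable}, and I denote by $\pi\colon X \to X/K$ the canonical projection. Choosing a sequence $(\overline{v}_n)$ in $\widehat{X/K}$ that characterizes $H/K$, I set $v_n := \overline{v}_n \circ \pi \in \widehat{X}$. Then $v_n(x) = \overline{v}_n(\pi(x))$, so $v_n(x) \to 0$ in $\T$ if and only if $\pi(x) \in H/K$, i.e. if and only if $x \in \pi^{-1}(H/K)$. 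Because $K \le H$ one has $\pi^{-1}(H/K) = H$, hence $\sv(X) = H$ and $H$ is characterized.

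For necessity, suppose $H = \sv(X)$ for some $\vs = (v_n) \in \widehat{X}^\N$. I would take $K := \bigcap_n \ker v_n$, which is the annihilator $D^\perp$ of the countable subgroup $D := \langle v_n : n\in\N\rangle \le \widehat{X}$. This $K$ is the natural candidate. It is a closed subgroup as an intersection of kernels; it is contained in $H$, since every $x \in K$ satisfies $v_n(x)=0$ for all $n$ and hence $v_n(x)\to 0$; and it is a $G_\delta$ because $\{0\}$ is a $G_\delta$ in the metrizable group $\T$, so that $K = \bigcap_n \bigcap_m \{x\in X : \lb v_n(x)\rb < \tfrac{1}{m}\}$ is a countable intersection of open sets. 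It remains to verify that $X/K$ is metrizable and that $H/K$ is characterized there. For metrizability I would invoke Pontryagin duality: since $\widehat{X}$ is discrete, the double annihilator $K^\perp = (D^\perp)^\perp$ equals the closure $\overline{D} = D$, so $\widehat{X/K} \cong K^\perp = D$ is countable, whence the compact group $X/K$ is metrizable. Finally, as each $v_n$ vanishes on $K$, it factors uniquely as $v_n = \overline{v}_n \circ \pi$ with $\overline{v}_n \in \widehat{X/K}$; for $y = \pi(x)$ one has $\overline{v}_n(y)\to 0$ iff $v_n(x)\to 0$ iff $x \in H$, and using $K\le H$ once more this yields $s_{(\overline{v}_n)}(X/K) = \pi(H) = H/K$. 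Thus $H/K$ is characterized in $X/K$.

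The main obstacle I anticipate is the necessity direction, and specifically the metrizability of $X/K$: one must pass from the sequence of characters to the discrete dual $\widehat{X}$ and use the duality identity $(D^\perp)^\perp = \overline{D}$ to conclude that $\widehat{X/K}$ is countable. Everything else reduces to bookkeeping with the quotient map and the inclusion $K \le H$, but this duality step is where the compactness of $X$ and the discreteness of $\widehat{X}$ are genuinely used; without it there is no reason for the quotient $X/K$ to be metrizable, and the reduction to the metrizable case would fail.
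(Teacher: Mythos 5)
The paper does not actually prove this statement: it is imported verbatim as Theorem B of \cite{DG}, so there is no internal proof to compare yours against. Your argument is correct, and it is the natural (surely the intended) one: lifting a characterizing sequence along the quotient map for sufficiency, and taking $K=\bigcap_n\ker v_n$ for necessity, with the bookkeeping identities $\pi^{-1}(H/K)=H$ and $\pi(H)=H/K$ justified by $K\le H$. One remark on the step you single out as the crux: the Pontryagin-duality computation $\widehat{X/K}\cong K^\perp=(D^\perp)^\perp=\overline{D}=D$ can be bypassed entirely. By the definition of $K$, the induced characters $\overline{v}_n$ separate the points of $X/K$, so the map $y\mapsto(\overline{v}_n(y))_{n\in\N}$ is a continuous injective homomorphism of the compact group $X/K$ into the metrizable group $\T^\N$, hence a topological embedding, and metrizability of $X/K$ follows at once; compactness is still used (to know the injection is an embedding), but the bi-annihilator identity is not needed.
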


 The following theorem from \cite{DG} provides information about the Borel complexity of  characterized subgroups: 

\begin{teo}\label{teoDG1} {\rm (\cite{DG})} For every infinite compact abelian group $X$ the $G_\delta$-subgroups are closed and the following inclusions hold:
\begin{equation}\label{*}
 \G(X) \subsetneqq \CH(X) \subsetneqq \FF(X)\ \  \mbox{ and } \  \  \F(X) \not \subseteq  \CH(X).
\end{equation}
If in addition $X$ has finite exponent, then $\CH(X)  \ \subsetneqq \ \F(X).$
\end{teo}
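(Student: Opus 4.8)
The plan is to treat the chain of inclusions and the four strictness/non\nobreakdash-containment assertions separately, reducing to the metrizable case wherever possible and isolating the one genuinely hard construction. For the inclusions: that every $G_\delta$\nobreakdash-subgroup is closed is a Baire-category fact, since if $H$ were a dense $G_\delta$\nobreakdash-subgroup of $\overline H$, then $H$ and any coset $g+H$ with $g\in\overline H\setminus H$ would both be comeager in the compact (hence Baire) group $\overline H$ while being disjoint, which is impossible; so $H=\overline H$. For $\G(X)\subseteq\CH(X)$ I would invoke Theorem \ref{ThDG}: a $G_\delta$\nobreakdash-subgroup $K$ is closed by the previous step, $X/K$ is compact metrizable, and $K/K=\{0\}$ is countable, hence characterized in $X/K$ by Theorem \ref{ThDK}; thus $K$ is characterized. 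The inclusion $\CH(X)\subseteq\FF(X)$ is exactly Remark \ref{rem1}. Finally, if $X$ has finite exponent $m$, every character takes values in the finite subgroup $\frac1m\Z/\Z$ of $\T$, so $v_n(x)\to0$ forces $v_n(x)=0$ eventually; hence $\sv(X)=\bigcup_{k}\bigcap_{n\ge k}\ker v_n$ is a countable union of closed subgroups, giving $\CH(X)\subseteq\F(X)$.

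For the strictness $\G(X)\subsetneq\CH(X)$ I would exhibit a characterized subgroup that is not closed. Choosing a countably infinite subgroup $B$ of the infinite discrete group $\widehat X$ and setting $N=B^\perp$, the quotient map $q\colon X\to X/N$ is onto a compact metrizable infinite group with dual $B$; picking a countable dense subgroup $D\le X/N$, which is characterized by Theorem \ref{ThDK}, and pulling it back, the subgroup $q^{-1}(D)$ is characterized (compose the characterizing characters with $q$) but not closed (as $q$ is a quotient map and $D$ is proper and dense), so it lies in $\CH(X)\setminus\G(X)$.

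The three remaining assertions—$\F(X)\not\subseteq\CH(X)$, the strictness of $\CH(X)\subsetneq\FF(X)$, and, in the finite-exponent case, that of $\CH(X)\subsetneq\F(X)$—all follow from a single statement: every infinite compact abelian group admits an $F_\sigma$\nobreakdash-subgroup that is \emph{not} characterized, since $\F(X)\subseteq\FF(X)$ automatically. A preliminary remark, obtained by combining Remark \ref{Rzeromeasure} with Baire category, is that a proper characterized subgroup (indeed any proper $F_\sigma$\nobreakdash-subgroup) has index either finite or uncountable; so the sought subgroup cannot be separated from $\CH(X)$ by its index alone, and a finer invariant is needed. When $X$ is \emph{not} metrizable the example is immediate: $\{0\}$ is closed, hence $F_\sigma$ and $F_{\sigma\delta}$, yet by Theorem \ref{ThDG} it is characterized only if it contains a closed $G_\delta$\nobreakdash-subgroup, i.e.\ only if $\{0\}$ itself is $G_\delta$, which fails precisely because $X$ is non-metrizable.

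The metrizable case is where the real work lies, and I expect it to be the main obstacle. In finite exponent the displayed formula shows that a characterized subgroup is an increasing union of closed \emph{subgroups}, so it suffices to produce an uncountable $F_\sigma$\nobreakdash-subgroup not of this form—for instance one containing no infinite closed subgroup. I would take $H=\langle F\rangle$ for a compact, $\Z$\nobreakdash-independent (Cantor) set $F$: the iterated sumsets exhibit $H$ as $F_\sigma$, independence makes $H$ uncountable, and one checks that no infinite closed subgroup of $X$ fits inside $H$, so any union of closed subgroups contained in $H$ is countable and $H$ cannot be characterized. In the general (e.g.\ connected) metrizable case this union-of-closed-subgroups criterion is unavailable, and the non-characterized $F_\sigma$\nobreakdash-subgroup must instead be separated from $\CH(X)$ by the finer test-topology / Polishability obstruction that is the theme of this paper (or by an explicit Arbault-type construction). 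Verifying that such an $F_\sigma$\nobreakdash-subgroup genuinely fails to be characterized is the hardest step, and the one on which I would concentrate the effort.
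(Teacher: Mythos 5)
First, a point of reference: the paper does not prove Theorem \ref{teoDG1} at all --- it is quoted from \cite{DG} --- so your attempt can only be measured against the auxiliary results the paper states. The parts of your proposal that go through are correct: the Baire-category argument that $G_\delta$-subgroups are closed; $\G(X)\subseteq\CH(X)$ via Theorems \ref{ThDG} and \ref{ThDK} applied to $K/K=\{0\}$ in the metrizable quotient $X/K$; $\CH(X)\subseteq\FF(X)$ via Remark \ref{rem1}; the formula $\sv(X)=\bigcup_{k}\bigcap_{n\ge k}\ker v_n$ when $X$ has finite exponent; the pullback $q^{-1}(D)$ of a countable dense subgroup of an infinite metrizable quotient, which indeed lies in $\CH(X)\setminus\G(X)$; and the observation that for non-metrizable $X$ the subgroup $\{0\}$ is $F_\sigma$ but not characterized (again by Theorem \ref{ThDG}). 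Your reduction of the remaining three assertions to the single statement ``$X$ admits an $F_\sigma$-subgroup that is not characterized'' is also sound.

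The genuine gap is that this single statement is exactly what you do not prove in the case that matters most: $X$ infinite, compact, \emph{metrizable}, of infinite exponent --- which includes $X=\T$, the central case of the whole paper. For such $X$ you say the required subgroup ``must be separated from $\CH(X)$ by the test-topology / Polishability obstruction'' and that this is ``the hardest step,'' but you never carry it out; consequently $\F(X)\not\subseteq\CH(X)$ and the strictness $\CH(X)\subsetneqq\FF(X)$ remain unproved for these groups. In addition, your finite-exponent metrizable case rests on the unverified claim that $\langle F\rangle$ contains no infinite closed subgroup when $F$ is an independent Cantor set (this needs a support/$\Delta$-system argument, plus the existence of such an $F$ in every infinite compact metrizable group of finite exponent). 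Note that the paper itself hands you the missing tool for a large class of groups: Bir\'o's theorem gives $\langle K\rangle\in\F(\T)\setminus\CH(\T)$ for any uncountable Kronecker set $K\subseteq\T$, and Gabriyelyan's Theorem \ref{Tpolgen} gives the same conclusion in any compact metrizable $X$ containing an uncountable Kronecker set, since $\langle K\rangle$ is $\sigma$-compact and hence $F_\sigma$. What is genuinely left --- and what constitutes the actual content of the proof in \cite{DG} --- is the existence of suitable uncountable Kronecker sets (or their independent-set analogues in bounded exponent) together with the non-polishability of the subgroups they generate. Your proposal correctly locates this work but does not do it.
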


It was proved in \cite{Ga2}, that the implication in the final part of the above Theorem can be inverted:

\begin{teo} {\rm (\cite{Ga2})} \label{ThGa} $\CH(X) \subseteq  \F(X)$ for a compact abelian group $X$ if and only if $X$ has finite exponent.\end{teo}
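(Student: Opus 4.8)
The forward implication (finite exponent $\Rightarrow\CH(X)\subseteq\F(X)$) is already contained in Theorem~\ref{teoDG1}, so the real task is the reverse one, and I would prove its contrapositive: if $X$ has \emph{infinite} exponent, then some characterized subgroup of $X$ fails to be $F_\sigma$. Throughout I pass to the Pontryagin dual $G=\widehat{X}$, using that a compact abelian group has finite exponent exactly when its discrete dual does (multiplication by $n$ annihilates $X$ iff it annihilates $\widehat{X}$); thus $G$ has infinite exponent.

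The engine of the argument is a pullback lemma. If $\pi\colon X\to Y$ is a continuous surjective homomorphism of compact abelian groups and $H_0=\sv(Y)$ is a characterized subgroup of $Y$, then $H:=\pi^{-1}(H_0)$ is characterized in $X$, namely by the sequence of characters $(v_n\circ\pi)_n$, since $\{x:v_n(\pi x)\to 0\}=\pi^{-1}(\sv(Y))=H$. Crucially, \emph{$F_\sigma$-ness is reflected}: as $\pi$ is a continuous map of a compact space onto a Hausdorff space it is closed, so if $H$ were $F_\sigma$ then $H_0=\pi(H)$ (using surjectivity) would be $F_\sigma$ too. Hence it suffices to produce a continuous surjection of $X$ onto some compact group that is \emph{known} to carry a non-$F_\sigma$ characterized subgroup.

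I would then locate such a quotient via the structure theory of infinite-exponent abelian groups, working dually: a subgroup $L\le G$ dualizes to a quotient $X\twoheadrightarrow\widehat{L}$. If $G$ is non-torsion it contains $\Z$, giving $X\twoheadrightarrow\widehat{\Z}=\T$, and here I pull back Gabriyelyan's example, the non-$F_\sigma$ subgroup $\{x\in\T:n!x\to 0\}$. If $G$ is torsion of infinite exponent, then either some primary component $G_p$ is unbounded — in which case, according as its divisible part is nontrivial or not, $G$ contains a copy of the Prüfer group $\Z(p^\infty)$ (giving $X\twoheadrightarrow J_p$, the compact group of $p$-adic integers) or an unbounded direct sum $\bigoplus_n\Z(p^{b_n})$ of finite cyclic $p$-groups (a basic subgroup of the reduced part) — or else infinitely many components $G_{p_n}$ are nonzero, and picking $g_n\in G_{p_n}\setminus\{0\}$ yields $\bigoplus_n\langle g_n\rangle\le G$ with $|g_n|\to\infty$. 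In both direct-sum situations one gets $X\twoheadrightarrow\prod_n\Z(m_n)$ with $m_n\to\infty$ (after passing to a subsequence). Thus the whole statement reduces to exhibiting a non-$F_\sigma$ characterized subgroup in each of the three canonical compact metrizable groups $\T$, $J_p$, and $\prod_n\Z(m_n)$ with $m_n\to\infty$.

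The genuinely hard part is the totally disconnected cases $J_p$ and $\prod_n\Z(m_n)$, where the circle example is unavailable. Here I would construct a ``factorial-type'' topologically torsion subgroup — for $Y=\prod_n\Z(m_n)$, a sequence $\vs$ of characters whose supports spread through the coordinates with relative orders growing to infinity — and show directly that the resulting $\sv(Y)$ is not $F_\sigma$. By Remark~\ref{rem1} it is automatically $F_{\sigma\delta}$, dense, and (by Remark~\ref{Rzeromeasure}) of Haar measure zero, so the obstruction to $F_\sigma$-ness is \emph{not} mere meagerness; the decisive step is a Baire-category argument in the compact Polish group $Y$ showing that no writing of $\sv(Y)$ as a countable union of closed sets is possible, because each closed piece would have to absorb a coset-translate structure incompatible with the prescribed spreading of the $v_n$. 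This non-$F_\sigma$ verification — in effect Gabriyelyan's technique transported from $\T$ to the profinite setting — is where the real work lies; once it is in place, the pullback lemma delivers the theorem in full generality.
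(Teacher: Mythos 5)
First, a point of order: the paper does not prove this theorem at all --- it is quoted from \cite{Ga2} as an external result (only the easy direction, finite exponent implies $\CH(X)\subseteq\F(X)$, appears in the paper, inside Theorem \ref{teoDG1}, itself cited from \cite{DG}). So there is no in-paper proof to compare yours against, and your proposal must stand on its own.

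Your reduction machinery is sound: the pullback lemma is correct (characterized subgroups pull back along continuous surjections via $v_n\circ\pi$, and $F_\sigma$-ness is reflected because closed subsets of a compact group have closed, indeed compact, images), the duality between subgroups of the discrete group $\widehat{X}$ and quotients of $X$ is used correctly, and the structure-theoretic trichotomy for an infinite-exponent discrete abelian group (non-torsion, hence containing $\Z$; torsion with an unbounded $p$-primary component, hence containing $\Z(p^\infty)$ or an unbounded direct sum of cyclic $p$-groups; torsion with infinitely many nontrivial primary components) is exhaustive. The case of $\T$ is also legitimately closed out by citing the $n!$ example of \cite{Ga2} (or, within this paper, Corollary E). The genuine gap is that the decisive step --- producing a characterized non-$F_\sigma$ subgroup of the two profinite model groups $\widehat{\Z(p^\infty)}=\mathbb{Z}_p$ and $\prod_n\Z(m_n)$ with $m_n\to\infty$ --- is only announced, not carried out: you say you ``would construct a factorial-type subgroup'' and verify non-$F_\sigma$-ness ``by a Baire-category argument,'' and you yourself flag this as ``where the real work lies.'' That is precisely the mathematical content of the hard direction, and nothing in your text supplies it: in the totally disconnected setting the circle-group toolkit (canonical representations with respect to an a-sequence, the ratio dichotomy behind Theorem E) is unavailable, and it is not even specified which sequence of characters of $\mathbb{Z}_p$ or of $\prod_n\Z(m_n)$ to take. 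Note, for instance, that the naive analogue of the Armacost subgroup, $\{x\in\mathbb{Z}_p: p^nx\to 0\}$, is all of $\mathbb{Z}_p$ (since $p^n\to 0$ there), so the choice of sequence genuinely matters and the non-$F_\sigma$ verification cannot be waved through by analogy with $\T$.

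In summary: what you have is a correct reduction of the theorem to three model cases, of which only one ($\T$) is actually settled; the other two are deferred to an unproved claim that is essentially equivalent in difficulty to the theorem itself. As a proof the proposal is therefore incomplete.
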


The first named author and Gabriyelyan described completely $\CH(X)$ when $X$ has finite exponent: 

The following definition was inspired by a result from \cite{DK} establishing that certain $F_\sigma$-subgroups of compact metrizable groups
are characterized. 

\begin{defin}\label{DefCMC}{\rm (\cite{DG})}
	 A subgroup $H$ of a compact abelian group $X$ is {\em countable modulo compact} (briefly, {\em CMC}) if $H$ has a compact $G_\delta$-subgroup $K$ such that $H/K$ is countable.	
\end{defin}

We denote by $\CMC(X)$ the family of all CMC subgroups of $X$. The characterized $F_\sigma$-subgroups considered in \cite{DK}  are special 
CMC  subgroups with an additional property (namely,  $H/K$ is torsion, in the notation of Definition \ref{DefCMC}). 
By Theorem \ref{ThDK} and Theorem \ref{ThDG} (see \cite[Corollary B1]{DG}),
$\CMC(X)\subseteq  \CH(X) \cap \F(X)$ for a compact abelian group $X$. It is proved in  \cite{DG}, that $\CMC(X)=\CH(X)$ if and only if $X$ has finite exponent.

Since the group $\T$ is not of finite exponent,  Theorem \ref{ThGa} yields 
\begin{equation}\label{NEW1}
\CH(\T)\nsubseteq\F(\T). 
\end{equation}
On the other hand, the specific non-inclusion (\ref{NEW1}) was one of the main steps in the proof of the general Theorem \ref{ThGa} in \cite{Ga2}. It is curious
to note that (\ref{NEW1}) was {\em implicitly} established much earlier by Bukovsk\'y, Kholshevikova and Repick\'y in \cite{BKR}. In order to explain that we recall two notions related to sets of convergence of trigonometric series from \cite{BKR} (further information and results about sets of convergence of trigonometric series can be found in \cite{HMP,BKR}).

\begin{defin}
	\begin{itemize}
		\item A set $E\subseteq\T$ is an \emph{$N$-set} if there \NB 
		exists a sequence $\{a_n: n\in\N\}$ of non-negative real numbers  with $\sum a_n=+\infty$, such that the trigonometric series $\sum a_n\sin{2\pi nx}$ is absolutely convergent in $E$.
		\item A Borel set (resp. subgroup) $E\subseteq\T$ is a \emph{weak Dirichlet set} (resp. \emph{weak Dirichlet subgroup}) or briefly \emph{$wD$-set} (resp. \emph{$wD$-subgroup}) if for every measure $\mu$ carried by $E$, there exists an increasing sequence of positive integers $u_n$ such that $\lim_{n\to\infty}\int_E\left|e^{2\pi i u_nx}-1\right|d\mu=0$. 	\end{itemize}
\end{defin}

Let $\Nc(\T)$ denote the class of all $N$-sets of $\T$ and let $\wD(\T)$ denote the class of all $wD$-sets of $\T$.

\begin{rem}\label{rChwD}
 As an obvious consequence of the Dominated Convergence Theorem, every characterized subgroup of  $\T$  is a weak-Dirichlet subgroup \NB of $\T$.
\end{rem} 

\begin{teo}[{\cite[p.÷49]{HMP}}]\label{THMP}
  A Borel set $E\subseteq\T$ is an $N$-set if and only if it is contained in a $\sigma$-compact $wD$-set (or equivalently, \NB a $\sigma$-compact  $wD$-subgroup).
\end{teo}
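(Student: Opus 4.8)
The plan is to prove both implications by reducing everything to the elementary identity $|e^{2\pi i ux}-1| = 2|\sin \pi u x|$ and exploiting that the class of $N$-sets is closed under taking subsets (the same coefficient sequence witnesses the $N$-property of every subset). Writing $g_u(x):=|e^{2\pi i ux}-1|$, the forward implication then asks only for a $\sigma$-compact $wD$-set containing $E$, while the backward implication reduces, via downward closure, to showing that every $\sigma$-compact $wD$-set is itself an $N$-set. I record at the outset the two facts I will lean on: $wD$-sets are closed under subsets (a measure carried by a subset is carried by the larger set), and $g_{2u}\le 2g_u$ pointwise, which lets me replace any frequency by an even one.

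For the forward implication, suppose $(a_n)$ witnesses that $E$ is an $N$-set, so $\sum a_n = +\infty$ and $f(x):=\sum_n a_n|\sin 2\pi n x|<\infty$ for every $x\in E$. First I would observe that $f$ is lower semicontinuous (an increasing limit of the continuous partial sums), so each sublevel set $F_N:=\{x\in\T: f(x)\le N\}$ is closed, hence compact, and $F:=\{f<\infty\}=\bigcup_N F_N$ is $\sigma$-compact and contains $E$. The subadditivity $|\sin 2\pi n(x+y)|\le|\sin 2\pi n x|+|\sin 2\pi n y|$ together with $f(-x)=f(x)$ shows that $F$ is in fact a subgroup. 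It remains to check that $F$ is $wD$: given a measure $\mu$ carried by $F$ and $\varepsilon>0$, choose $N$ with $\mu(F\setminus F_N)<\varepsilon$; on $F_N$ one has $\int_{F_N} f\,d\mu\le N\|\mu\|<\infty$, so Tonelli gives $\sum_n a_n\int_{F_N}|\sin 2\pi n x|\,d\mu<\infty$, and since $\sum a_n=\infty$ this forces $\liminf_n\int_{F_N}|\sin 2\pi n x|\,d\mu=0$. Selecting frequencies along this liminf and bounding the tail by $\int_{F\setminus F_N}g_u\,d\mu\le 2\varepsilon$ yields, after diagonalising over $N\to\infty$, an increasing sequence $(u_k)$ with $\int_F g_{u_k}\,d\mu\to0$. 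Thus $F$ is the desired $\sigma$-compact $wD$-subgroup.

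For the backward implication, let $F=\bigcup_k K_k$ be a $\sigma$-compact $wD$-set with $K_k$ increasing and compact; by downward closure each $K_k$ is a compact $wD$-set. The heart of the matter is to produce, for a fixed compact $wD$-set $K$, convex combinations of the $g_u$ that are uniformly small on $K$. Reformulating $wD$ as $\inf_{u\ge M}\int_K g_u\,d\mu=0$ for every $M$ and every probability measure $\mu$ (the defining sequence is increasing, hence tends to infinity), and using $g_{2u}\le 2g_u$ to restrict to arbitrarily large \emph{even} frequencies, I would apply Sion's minimax theorem to the bilinear pairing $(\mu,\lambda)\mapsto\int_K\sum_u\lambda_u g_u\,d\mu$ over the weak$^*$-compact convex set of probability measures and the simplex of finitely supported weights. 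Since the infimum over $\lambda$ of such an integral equals $\inf_u\int_K g_u\,d\mu=0$, the minimax identity gives $\inf_{\lambda}\max_{x\in K}\sum_u\lambda_u g_u(x)=0$. Hence for each $m$ there is a convex combination $\psi_m$ of even frequencies, with support disjoint from the earlier ones (permitted because the frequencies may be taken as large as we wish), such that $\max_{K_m}\psi_m<2^{-m}$. Assembling the coefficients of all the $\psi_m$ into a single sequence $(a_n)$ gives $\sum a_n=\infty$ (each block contributes a fixed total mass and there are infinitely many blocks), while for $x\in F$, say $x\in K_{m_0}$, the sum $\sum_n a_n|\sin 2\pi n x|=\sum_m\psi_m(x)$ splits into finitely many bounded terms with $m<m_0$ and a tail dominated by $\sum_{m\ge m_0}2^{-m}$; so $F$ is an $N$-set, and therefore so is $E$.

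I expect the minimax interchange to be the main obstacle: verifying the hypotheses of Sion's theorem (weak$^*$ compactness and convexity on the measure side, convexity and the correct semicontinuity on the weight side) and, above all, arranging that the witnessing convex combinations use \emph{disjoint blocks of large even frequencies}, which is exactly what makes the final coefficient sequence well-defined and forces $\sum a_n=\infty$. The measure-theoretic steps — lower semicontinuity of $f$, the Tonelli computation, and the tail estimates — are routine by comparison, and the subgroup refinement is immediate once one notices that $\{f<\infty\}$ is already a subgroup, so the two formulations (containment in a $\sigma$-compact $wD$-set versus a $\sigma$-compact $wD$-subgroup) coincide.
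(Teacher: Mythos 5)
You should know at the outset that the paper contains no proof of this statement: Theorem~\ref{THMP} is quoted from Host--M\'ela--Parreau \cite{HMP} and used as a black box, its only role being to combine with Remark~\ref{rChwD} to yield $\CH(\T)\cap\F(\T)\subseteq\wD(\T)\cap\F(\T)\subseteq\Nc(\T)\cap\F(\T)$. So there is no in-paper argument to compare yours against; what follows is an assessment of your proof on its own merits.

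Your proof is correct, and the two worries you flag at the end do check out. The forward half is essentially complete as written: $f(x)=\sum_n a_n|\sin 2\pi nx|$ is lower semicontinuous, so $F_N=\{f\le N\}$ is compact and $F=\{f<\infty\}=\bigcup_N F_N$ is $\sigma$-compact; subadditivity of $|\sin|$ under addition of angles makes $F$ a subgroup (which is what makes the parenthetical ``subgroup'' version of the statement come for free); and the Tonelli/liminf extraction with the tail bound $\int_{F\setminus F_N}g_u\,d\mu\le 2\mu(F\setminus F_N)$ diagonalizes into one increasing witness sequence. Note that here you use the exact identity $g_{2n}(x)=2\left|\sin 2\pi nx\right|$ rather than the inequality $g_{2u}\le 2g_u$; both are elementary and you stated the right ones. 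In the backward half, the Sion hypotheses hold: $P(K)$ is weak$^*$-compact and convex, the pairing is affine in each variable, weak$^*$-continuous in $\mu$ for each finitely supported $\lambda$, and $\ell^1$-Lipschitz in $\lambda$ since $0\le g_u\le 2$; for each probability measure $\mu$ on the compact $wD$-set $K$ the increasing witness sequence tends to infinity, and $g_{2u}\le 2g_u$ keeps the infimum at $0$ inside the large \emph{even} frequencies, so Sion gives $\inf_\lambda \max_{x\in K}\psi_\lambda(x)=0$ with $\psi_\lambda=\sum_u\lambda_u g_u$. If you prefer to bypass Sion altogether, the same conclusion follows from Hahn--Banach separation in $C(K)$ between the convex hull of $\{g_u: u\in 2\N,\ u\ge M\}$ and the open convex set $\{h\in C(K):\max_K h<c\}$: positivity of the $g_u$ forces the separating functional to be a positive measure, which after normalization contradicts the $wD$ property of $K$; this is the classical route and removes your main perceived obstacle. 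The block assembly (disjoint blocks of even frequencies, mass $2$ per block, $\max_{K_m}\psi_m<2^{-m}$) then gives $\sum_n a_n=\infty$ together with $\sum_m\psi_m(x)<\infty$ for every $x\in F$, and downward closure of the class of $N$-sets transfers this to $E$. I see no gap.
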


As a consequence of Theorem \ref{THMP} and  Remark \ref{rChwD} one obtains the following corollary.

\begin{cor}   
  $\CH(\T)\cap\F(\T)\subseteq\wD(\T)\cap\F(\T)\subseteq\Nc(\T)\cap\F(\T).$
\end{cor}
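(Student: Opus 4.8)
The plan is to establish the two inclusions separately, each reducing almost immediately to one of the two preceding results, Remark \ref{rChwD} and Theorem \ref{THMP}.

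For the first inclusion $\CH(\T)\cap\F(\T)\subseteq\wD(\T)\cap\F(\T)$, I would simply invoke Remark \ref{rChwD}. Any $H$ in the left-hand class is simultaneously a characterized subgroup and an $F_\sigma$-subgroup; by Remark \ref{rChwD} the first property alone forces $H$ to be a weak Dirichlet subgroup, so $H\in\wD(\T)$, while $H\in\F(\T)$ is retained unchanged. Hence $H\in\wD(\T)\cap\F(\T)$, and no work beyond quoting the remark is required.

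For the second inclusion $\wD(\T)\cap\F(\T)\subseteq\Nc(\T)\cap\F(\T)$, I would fix $H\in\wD(\T)\cap\F(\T)$ and aim to apply the ``if'' direction of Theorem \ref{THMP}. The key observation is that, because $\T$ is compact, every closed subset of $\T$ is compact, so an $F_\sigma$-subgroup---being a countable union of closed sets---is automatically $\sigma$-compact. Thus $H$ itself is a $\sigma$-compact weak Dirichlet subgroup of $\T$. Taking $E=H$, the set $H$ is Borel (being $F_\sigma$) and is contained in the $\sigma$-compact $wD$-subgroup $H$, so Theorem \ref{THMP} yields that $H$ is an $N$-set, i.e.\ $H\in\Nc(\T)$. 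Combined with $H\in\F(\T)$, this gives $H\in\Nc(\T)\cap\F(\T)$.

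I expect the only point meriting any care---and it is routine---to be the identification of $F_\sigma$-subgroups of the compact group $\T$ with $\sigma$-compact subgroups, which is precisely what allows $H$ to play simultaneously the role of the Borel set $E$ and of the $\sigma$-compact $wD$-subgroup containing it in Theorem \ref{THMP}. There is no genuine obstacle here: the hypothesis ``$H$ is contained in a $\sigma$-compact $wD$-set'' is witnessed by $H$ itself, so the equivalence of Theorem \ref{THMP} applies directly and the corollary follows.
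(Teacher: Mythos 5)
Your proposal is correct and matches the paper's own (implicit) argument: the paper derives this corollary directly from Remark \ref{rChwD} for the first inclusion and Theorem \ref{THMP} for the second, exactly as you do. Your explicit observation that an $F_\sigma$-subgroup of the compact group $\T$ is automatically $\sigma$-compact, so that $H$ itself serves as the witnessing $\sigma$-compact $wD$-subgroup in Theorem \ref{THMP}, is precisely the point the paper leaves to the reader.
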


It was proved by Arbault in \cite{Arb} that  $\A:=\tTu$, with $\us=(2^{2^n})$, is not an $N$-set. Hence, the above corollary yields $\A\notin\F(\T)$ and consequently (\ref{NEW1}). 
 Unaware of this fact, Gabriyelyan \cite{Ga2}  proved directly that the subgroup of $\T$ characterized by the sequence $u_n = n!$ ($n \in \N$) is not an $F_\sigma$-set in $\T$, so witnessing again the non-inclusion (\ref{NEW1}). Note that both sequences $(n!)$ and $(2^{2^n})$
 % used respectively in \cite{Ga2} and \cite{BKR} 
 are a-sequences. 

In the light of these two examples, it is natural to ask when $\tTu\in\F(\T)$ for a \NB {\em general sequence} $\us$. The line of the proof of Theorem \ref{ThGa} suggests to use the technique of Polishable subgroup of a Polish group. 

\subsection{About Polishability of Characterized Subgroups}\label{Polishable}

The notion of Polishable subgroup was introduced in \cite{KL}.

\begin{defin}\label{DPol}
	A subgroup $H$ of a Polish group $G$ is \emph{polishable} if $H$ satisfies one of the following equivalent conditions: 
	\begin{itemize}
		\item[(a)] there exists a Polish group topology $\tau$ on $H$ having the same Borel sets as $H$ when considered as a topological subgroup of $G$; 
		\item[(b)] there exists a continuous isomorphism from a Polish group $P$ to $H$; 
		\item[(c)] there exists a continuous surjective homomorphism from a Polish group $P$ onto $H$. 
	\end{itemize}
\end{defin}
It is a folklore fact that the topology witnessing the polishability of a subgroup $H$ is unique (see \cite{S} for an explicit proof). We denote by $\pi(H)$ this unique topology.

\begin{rem}\label{11Ott} 
Let $(X,\tau)$ be a compact metrizable abelian group and $H\le X$. It is important to mention that $\pi(H) = \tau\restriction_H$, if and only if $H$ is closed. Indeed, $\tau\restriction_H$ is Polish if and only if $H\in\G(X)$. By  \cite[Proposition 2.4]{DG} (see also Theorem \ref{teoDG1}), the class $\G(X)$ coincides with class of closed subgroups of $X$. Using the uniqueness of $\pi(H)$, we can conclude that $\pi(H) = \tau\restriction_H$ precisely when $H$ is closed. 
\end{rem}

Answering negatively a question of Kunen and \NB the first named author on whether $\F(\T)\subseteq \CH(\T)$, Bir\'o proved the   more precise theorem below. To state 
the theorem, let us recall that a non empty compact subset $K$ of an infinite compact metrizable abelian group $X$ is called a \emph{Kronecker set}, if for every continuous function $f:K\to\T$ and $\varepsilon>0$ there exists a character $v\in\widehat{X}$ such that
 \begin{equation*}
  \max\left\{\lb f(x)-v(x)\rb:x\in K\right\}<\ep.
 \end{equation*}
Bir\'o proved  that every characterized subgroup of $\T$ is Polishable and that for any uncountable Kronecker set $K$ in $\T$, the $F_\sigma$-subgroup $\langle K\rangle$ generated by $K$ is not polishable and hence not characterized:

\begin{teo}[{\cite[Theorem 2]{B}}]
	If $K$ is an uncountable Kronecker set of $\T$, then $\langle K\rangle\in\F(\T)\setminus\CH(\T)$.
\end{teo}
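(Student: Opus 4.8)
The plan is to prove the two parts separately, since they assert membership in $\F(\T)$ and failure of polishability, the latter implying (by the first part of Bir\'o's result, that every characterized subgroup of $\T$ is polishable) that $\langle K \rangle \notin \CH(\T)$. For the $F_\sigma$ claim, I would observe that $\langle K \rangle = \bigcup_{n} (K + (-K))^{+n}$ is the increasing union of the continuous images $n\cdot(K \cup (-K) \cup \{0\})^n$ of compact sets under the (continuous) iterated addition map $\T^{k} \to \T$; each such image is compact, hence closed, so $\langle K \rangle$ is a countable union of compact sets and therefore lies in $\F(\T)$. Uncountability of $\langle K \rangle$ is immediate once $K$ is uncountable, so $\langle K \rangle \in \F(\T)$ and $|\langle K\rangle| = \cc$.

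The substantive part is showing $\langle K \rangle \notin \CH(\T)$, equivalently that $\langle K \rangle$ is not polishable. The strategy I would follow exploits the uniqueness of the Polish topology $\pi(H)$ together with the defining property of a Kronecker set. Suppose toward a contradiction that $\pi := \pi(\langle K \rangle)$ exists, so that $(\langle K\rangle, \pi)$ is a Polish group with the same Borel structure as the subspace topology from $\T$. The key is that the Kronecker condition forces the characters $v \in \widehat{\T} = \Z$, restricted to $K$, to approximate \emph{every} continuous function $f : K \to \T$ uniformly; in particular the restriction map $\widehat{\T} \to \T^K$ has image dense in $C(K,\T)$. This density is exactly what should obstruct the existence of a finer Polish group topology: any such topology would have to be generated by a countable family of continuous characters (via a compatible two-sided invariant metric and a Fubini/category argument on the Polish group $P$ mapping onto $H$), but the uniform approximation property of a Kronecker set produces, on an uncountable $K$, more continuous $\T$-valued functions than any countable family of characters can control.

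Concretely, I would argue as follows. Using condition (c) of Definition~\ref{DPol}, fix a continuous surjective homomorphism $\theta : P \to \langle K \rangle$ from a Polish group $P$. Since $K \subseteq \langle K\rangle$ is compact in $\T$ and uncountable, and since the subspace and Borel structures agree, one can pull $K$ back and use a descriptive-set-theoretic selection/category argument to find a compact subset of $P$ mapping onto a large subset of $K$. The Kronecker property then lets me build a continuous homomorphism $\chi : \langle K \rangle \to \T$ (extending suitable characters) that is $\pi$-continuous but cannot be realized as a limit, in $\pi$, of the integer characters $u_n|_{\langle K\rangle}$; this contradicts the fact that in the Polish topology $\pi$ the group $\langle K \rangle$ must be separable and metrizable, so its continuous characters are determined by countably many data. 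The hard part will be making this approximation/obstruction argument precise: specifically, extracting from the uniform-approximation property of the uncountable Kronecker set a concrete feature of $C(K,\T)$ that is incompatible with separability of $(\langle K \rangle, \pi)$, and correctly transferring the Borel/topological information across the map $\theta$. This is where Bir\'o's original proof invests its technical weight, and I would expect to need the full strength of the interplay between Haar measure on $P$, the zero-measure behaviour of characterized subgroups (cf. Remark~\ref{Rzeromeasure}), and the rigidity of Kronecker sets.
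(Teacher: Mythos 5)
The statement you are proving is quoted in the paper from Bir\'o's work (reference [B, Theorem 2]); the paper itself contains no proof of it, so your attempt can only be judged on its own merits. The easy half of your proposal is correct: $\langle K\rangle$ is the countable union of the $n$-fold sum sets of the compact set $K\cup(-K)\cup\{0\}$, each of which is a continuous image of a compact set and hence compact, so $\langle K\rangle$ is $\sigma$-compact and lies in $\F(\T)$. Likewise, the reduction of non-characterizability to non-polishability is legitimate, since (as recorded in Theorem \ref{Tpolgen} of the paper) every characterized subgroup of a compact metrizable abelian group is polishable.

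The genuine gap is that the substantive half --- $\langle K\rangle$ is \emph{not} polishable when $K$ is uncountable --- is never actually proved: you explicitly defer ``the hard part,'' and the sketch you give in its place rests on claims that are unjustified or simply false. A Polish group topology on an abelian group need not be ``generated by a countable family of continuous characters'': there exist Polish abelian groups with no nontrivial continuous characters at all, and separable Polish abelian groups (e.g.\ the additive group of $\ell^1$) whose continuous character group is uncountable, so the principle ``separability of $(\langle K\rangle,\pi)$ forces its continuous characters to be determined by countably many data'' is not available. Consequently, producing a $\pi$-continuous homomorphism $\chi:\langle K\rangle\to\T$ that is not a limit of integer characters yields no contradiction with polishability by itself. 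What the actual proof needs, and what is missing here, is the measure-theoretic rigidity of uncountable Kronecker sets: such a set carries a continuous probability measure, and the Kronecker approximation property converts pointwise convergence of characters on $K$ into very strong (uniform, or measure-theoretic) convergence; this is then played against a Baire-category decomposition of the hypothetical Polish group $(\langle K\rangle,\pi)$ into the compact sum sets above. None of that argument is carried out in your proposal, so only the easy inclusion $\langle K\rangle\in\F(\T)$ has been established.
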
        

A significant generalization of the above results was obtained by Gabriyelyan \cite{Ga1}: 

\begin{teo}\label{Tpolgen} {\rm \cite[Th. 1, Th. 2]{Ga1}} Let $X$ be a compact metrizable abelian group. Then $\sv(X)$ is Polishable for every sequence $\vs$ of characters of $X$.
If $K$ is an uncountable Kronecker set in $X$, then $\langle K\rangle$ is not polishable; so $\langle K\rangle$ is not characterized.
\end{teo}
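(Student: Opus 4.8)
The statement splits into two essentially independent assertions, and I would treat them separately.

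For the polishability of $\sv(X)$, the plan is to realize this group as a continuous isomorphic image of a closed subgroup of a Polish group, thereby verifying condition (b) of Definition \ref{DPol}. Let $c_0(\T)$ denote the group of all sequences $(t_n)\in\T^\N$ with $\lb t_n\rb\to 0$, equipped with the metric $D((s_n),(t_n))=\sup_n\lb s_n-t_n\rb$; a routine check shows that $c_0(\T)$ is a Polish abelian group (a complete, separable metric group), so that $X\times c_0(\T)$ is Polish. Consider the homomorphism $\psi:X\to\T^\N$, $\psi(x)=(v_n(x))_n$, and the graph
\[
\Gamma=\{(x,\psi(x)):x\in\sv(X)\}\subseteq X\times c_0(\T).
\]
I would first verify that $\Gamma$ is a \emph{closed} subgroup: if $(x_k,\psi(x_k))\to(x,s)$, then $x_k\to x$ forces $v_n(x_k)\to v_n(x)$ for each $n$ by continuity, while convergence in $c_0(\T)$ gives $v_n(x_k)\to s_n$; hence $s=\psi(x)\in c_0(\T)$, so $x\in\sv(X)$ and $(x,s)\in\Gamma$. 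Thus $\Gamma$ is Polish, and the first projection restricts to a continuous bijective homomorphism $p:\Gamma\to\sv(X)$ onto $\sv(X)$ with its subspace topology, i.e. a continuous isomorphism, which establishes polishability.

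For the second assertion, suppose towards a contradiction that $H=\langle K\rangle$ is polishable, and fix via condition (c) of Definition \ref{DPol} a Polish group $P$ and a continuous surjective homomorphism $f:P\to H$ (continuous into $X$). The first key step is algebraic: the Kronecker property forces the distinct elements of $K$ to be $\Z$-linearly independent, so that $H$ is the free abelian group on $K$. Indeed, given distinct $k_1,\dots,k_n\in K$ and integers $m_i$ with $\sum_i m_ik_i=0$, for any target $(t_1,\dots,t_n)\in\T^n$ one interpolates a continuous $f_0:K\to\T$ with $f_0(k_i)=t_i$ and approximates it uniformly by a character $\chi$; applying $\chi$ to the relation yields $\sum_i m_it_i=0$ for all $(t_i)\in\T^n$, whence $m_i=0$ for every $i$. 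Consequently the word length (the $\ell^1$-norm of the coordinate vector in the free module $\Z^{(K)}$) of each element of $H$ is well defined.

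Now I would run a Baire-category argument on $P$. Writing $\langle K\rangle_n$ for the set of words of length $\le n$ in $K$, each $\langle K\rangle_n$ is the image of the compact set $K^n\times\{-1,0,1\}^n$ under the sum map, hence compact (and closed in $X$), and $H=\bigcup_n\langle K\rangle_n$. Thus $P=\bigcup_n f^{-1}(\langle K\rangle_n)$ is a countable union of closed sets, so by Baire some $f^{-1}(\langle K\rangle_N)$ has nonempty interior; translating, there is a symmetric open neighbourhood $W$ of $0_P$ with $f(W)\subseteq\langle K\rangle_{2N}$, and I pick a symmetric open $W_0\ni 0_P$ with $W_0+W_0\subseteq W$. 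The decisive step exploits the uncountability of $K$: for each $k\in K$ choose $p_k\in P$ with $f(p_k)=(2N+1)k$. If $(p_k+W_0)\cap(p_{k'}+W_0)\neq\emptyset$ for some $k\neq k'$, then $f(p_k-p_{k'})=(2N+1)(k-k')\in f(W)\subseteq\langle K\rangle_{2N}$; but by independence this element has word length $4N+2>2N$, a contradiction. Hence $\{p_k+W_0:k\in K\}$ is an uncountable family of pairwise disjoint nonempty open subsets of $P$, contradicting the separability of the Polish group $P$. Therefore $\langle K\rangle$ is not polishable, and since every characterized subgroup is polishable by the first assertion, $\langle K\rangle$ is not characterized.

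The routine part is the first assertion; the main obstacle is isolating the correct mechanism in the second. The naive attempt is to separate the points of $K$ directly, but the polishing map $f$ need not make $K$ discrete. The resolution is to use the free structure supplied by the Kronecker property to manufacture an uncountable, uniformly word-length-separated family $\{(2N+1)k\}_{k\in K}$ lying inside the single compact layer $\langle K\rangle_{2N}$ produced by Baire's theorem, and then to extract the contradiction from separability (the countable chain condition) rather than from discreteness.
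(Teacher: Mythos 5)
Your proof is correct, but note that the paper does not actually prove this theorem: it is quoted from Gabriyelyan \cite{Ga1} (the Kronecker half for $X=\T$ being Bir\'o's \cite{B}), so the only proof-related material in the paper to compare against is the indication, around Definition~\ref{DefPolTopo}, that polishability of $\sv(X)$ is witnessed by the metric $\varrho_\vs$. Your first argument is exactly that mechanism in different packaging: the max-metric of $X\times c_0(\T)$ restricted to the graph $\Gamma$ pulls back along the projection $p$ to precisely $\varrho_\vs\restriction_{\sv(X)}$, so proving that $\Gamma$ is a closed subgroup of the Polish group $X\times c_0(\T)$ is the same as proving that $(\sv(X),\varrho_\vs)$ is Polish --- with the advantage that completeness and separability are inherited from closedness in a Polish product rather than checked by hand. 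Your second argument is self-contained and sound: the interpolation-plus-approximation step does force $K$ to consist of independent non-torsion elements (so $\langle K\rangle$ is free abelian over $K$ and word length is well defined); the layers $\langle K\rangle_n$ are compact, hence their $f$-preimages are closed and Baire category applies to $P$; the translation step correctly doubles the index, giving $f(W)\subseteq\langle K\rangle_{2N}$; and the family $\{p_k+W_0\}_{k\in K}$ is pairwise disjoint because $(2N+1)(k-k')$ has word length $4N+2>2N$, which contradicts separability of $P$. Your closing observation also isolates the right difficulty: nothing forces the points of $K$ to become discrete under a polishing topology, so the contradiction must come from the countable chain condition rather than from discreteness, and that is how you use uncountability. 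What your route buys is elementarity and self-containedness --- Urysohn interpolation, Baire category, and the fact that a separable metric space has no uncountable family of pairwise disjoint nonempty open sets; since the proofs of \cite{Ga1} and \cite{B} are not reproduced in the paper, your argument can serve as a complete substitute for the citation.
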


The polishability of $\sv(X)$ is witnessed by a topology induced by the following metric on $X$ (see \cite{Ga1,DG} for a proof):

\begin{defin}\label{DefPolTopo}  Let $X$ be a compact metrizable abelian group with compatible metric $\delta$ and let $\vs=(v_n)$ be a sequence of characters of $X$.  For $x,y\in X$ let 
    \begin{equation}\label{def_ro}
   \varrho_\vs(x,y)=\sup_{n\in\N}\{\delta(x,y),d(v_n(x),v_n(y))\}.
  \end{equation}
\end{defin}

The  metric  topology $\tau_{\vs}$ of $X$ generated by $\varrho_\vs$ is complete (see \cite{DG}), we refer to it as {\em associated to $\vs$}. The Polish topology $\pi(\sv(X))$ witnessing the polishability of $\sv(X)$,  is the restriction of $\tau_{\vs}$ in $\sv(X)$. It was introduced for $\sv(X)$ in \cite{B,Ga1} inspired by \cite{AN} and then extended on the whole group $X$ in \cite{DG}. 

Let $\sPol(X)$ denote the collection of all Polishable subgroups of $X$.  Hence, Theorem \ref{Tpolgen} (a) can be written briefly as
\begin{equation*}
	\CH(X)\subseteq\sPol(X).
\end{equation*} 
Hjort \cite{Hj} proved that every uncountable Polish group contains polishable subgroups of arbitrary Borel complexity witnessing $\CH(X)\subsetneq\sPol(X)$ \NB in view of Remark \ref{rem1}. In \cite{Ga4,Ga1} Gabriyelyan proved that there exists a compact group $X$ witnessing $\CH(X)\cap\F(X)\subsetneq\sPol(X)\cap\F(X)$. More precisely, he proved that there exists a polishable $F_\sigma$ subgroup of $\T^\N$ that is not characterized.

%%%%%%%%%%%%%%%%%%%%%%%%%%%%%%%%%%%%%%%%%%%%%%%%%%%%%%%%%%%%%%%%%%%%%%%%%%%%%%%
% ----------------------------  M a i n  R e s u l t s   ----------------------
%%%%%%%%%%%%%%%%%%%%%%%%%%%%%%%%%%%%%%%%%%%%%%%%%%%%%%%%%%%%%%%%%%%%%%%%%%%%%%%

\section{Main Results}

In this paper we provide a general tool to study the special aspect of Problem \ref{probBor} concerning $F_\sigma$-subgroups (Theorem A) with particular emphasis on the case $X=\T$. It is based on a complete description, in terms of an appropriate topology defined starting from a sequence which characterizes a subgroup of $X$. 
We provide a general criterion in Theorem A to determine whether $\tTu$ is an $F_\sigma$-set.
In the case of $X = \T$ and  a-sequences one can prove much more than the mere non-inclusion $\CH(\T)  \not \subseteq  \F(\T)$ established in Theorem \ref{teoDG1}. Namely, \emph{the only proper $F_\sigma$-subgroups of $\T$, characterized by an a-sequence are the countable ones}. \NB 
Moreover, for a-sequences we will find an explicit subgroup that witnesses the separability of $(\tTu,\pi(\tTu))$, namely the torsion subgroup $t(\tTu)$. 

In the general case of a compact metrizable abelian group $X$ we have defined above a second assignment
with domain $\widehat{X}^\N$, namely $\vs \mapsto \tau_{\vs}$. This assignment defines a second equivalence relations in $\widehat{X}^\N$ by $\us\approx\vs$ if and only if $\tau_{\us}=\tau_{\vs}$.  We discuss the connection between the equivalence relations $\sim$ and $\approx$. In general, $\us\approx\vs \not \Rightarrow \us\sim\vs$ (see Remark \ref{RemXXX}). 
On the other hand, the topology $\pi(\sv(X))$, witnessing the polishability of $\sv(X)$ is unique. It coincides with the restriction of $\tau_{\vs}$ to $\sv(X)$, so this restriction depends only on the characterized subgroup and not on the sequence that characterizes it. In \S \ref{Stest} we prove that if the characterized subgroup $\sv(X)$ is an $F_\sigma$-subgroup, then the uniqueness of the topology $\tau_{\vs}$ extends to the whole group in the sense that  $\sv(X)=\su(X)$ implies $\tau_{\vs}=\tau_{\us}$ (i.e., $\us\sim\vs\Rightarrow\us\approx\vs$) in case $\sv(X) = \su(X)$ is an $F_\sigma$-subgroup, see Corollary A1.  We conjecture that this implication fails in the general case: 

\begin{que}\label{QtildeDoppiaTilde} Does $\us\sim\vs\Rightarrow\us\approx\vs$ in the general case?
\end{que}

% \begin{rem}
%  Clearly if $X$ has finite exponent, then $\LL(X)=0$. The converse is not true, for example $X=\T$. Hence in that case $\CH(\T)\setminus\sLC(\T)\neq\emptyset$. As we will see $\sLC(\T)=\{\text{countable subgroups of }\T\}\cup\{\T\}$ and hence all uncountable characterized subgroups (e.g. $\tTu$ for $\us=(n!)$) witness the previous inequality.
% \end{rem}

\subsection{Results for compact metrizable abelian groups}

For a compact metrizable  abelian group $(X,\tau)$, we introduce another group topology $\tau_{\vs}^*$ with $ \tau\subseteq\tau_{\vs}^*\subseteq\tau_{\vs}$ in \S\ref{Stest}
 (see Definition \ref{DefTestTopo}). 
 We refer to $\tau_{\vs}^*$ as the \emph{$F_\sigma$-test topology} with respect to the sequence $\vs$. The motivation for such a choice is clear from the next theorem: 	

\begin{nteo}{Theorem A}
  Let $X$ be a compact metrizable abelian group $(X,\tau)$ and $\vs\in \widehat{X}^\N$. Then 
  \begin{equation*}
   \sv(X)\in\F(X)\Longleftrightarrow\sv(X)\in\tau_{\vs}^*.
  \end{equation*}
\end{nteo}

As a corollary of Theorem A, one can prove that the topology $\tau_\vs$ does not depend on the choice of the characterizing sequence $\vs$, whenever the characterized subgroup $\sv(X)$ is an $F_\sigma$-subgroup.

\begin{nteo}{Corollary A1}
 Let $X$ be compact metrizable abelian group and $\vs$ be a sequence of characters with $\sv(X)\in\F(X)$. If $\us$ is a sequence of characters with $\us\sim\vs$, then $\us\approx\vs$.
%  \item[(b)] If $X$ is also infinite, then the following are equivalent:  
%  \begin{itemize}
%      \item[(b$_1$)] $(X,\tau_{\varrho_\vs})$ is Polish; 
%      \item[(b$_2$)] $(X,\tau_{\varrho_\vs})$ is separable;
%      \item[(b$_3$)] $\sv(X)$ is a closed finite-index (so, open) subgroup of $X$.
%      \item[(b$_4$)] there exists a sequence $\us  $ that splits in finitely many constant sequences $\us^{(i)}=(u_n^{(i)})$, $i = 1,2,\ldots,m$
%      such that each $\us^{(i)}$ is determined by some torsion character $u_n^{(i)}$ of $X$.  
\end{nteo}

The next corollary of Theorem A shows that $(X,\tau_{\vs})$ is Polish only in some very special cases.

\begin{nteo}{Corollary A2} Let $(X,\tau)$ be a compact metrizable abelian group and $\vs\in\widehat{X}^\N$. Then the following are equivalent: 
\begin{itemize}
   \item[(a)] $(X,\tau_{\vs})$  is Polish; 
   \item[(b)] $\sv(X)$ is a $\tau$-open subgroup;
   % of finite index; 
   \item[(c)] $\tau_\us = \tau$;  
   \item[(d)] $\vs$ has no faithfully indexed subsequences and every $v_n$ that appears infinitely many times in $\vs$ is a torsion character. 
\end{itemize}
\end{nteo}

\begin{rem}\label{RnoFaith} 
The second part of in item (d) of Corollary A2 cannot be omitted. Indeed it may occur that $\vs$ has no faithfully indexed subsequences, but $\mu(\sv(X))=0$ and hence it has uncountable index. Take for example $X=\T$ and $\vs=(2)$ the constant sequence with term 2, then $\sv(\T)=\{0,\frac{1}{2}\}$ is a finite closed subgroup of measure $0$ and of uncountable index.
\end{rem}

The next theorem, proved in \cite{DI_}, describes the cases when $\tau_{\us}$ and its restriction $\pi(\su(X)) = \tau_{\us}\restriction_{\su(X)}$ are discrete. Since the latter is  a finer Polish topology on $\su(X)$, the group $(\su(X), \pi(\su(X)))$ is discrete if and only if $\su(X)$ is countable. Moreover, $\sv(X)$ is countable if and only if $\tau_{\vs}=\tau_{\vs}^*$ is discrete: 

\begin{teo}{}\label{ex_B}{\rm (\cite{DI_})}
	 Let $X$ be a metrizable compact abelian group and $\vs\in \widehat{X}^\N$. Then the following are equivalent\NB: 
\begin{itemize}
   \item[(a)] $\sv(X)$ is countable;
   \item[(b)] $|\sv(X)|<\cc$;
   \item[(c)] $\tau_{\vs}$ is discrete;
   \item[(d)] $\tau_{\vs}^*$ is discrete;
   \item[(e)] $\tau_{\vs}^*\restriction_{\sv(X)}$ is discrete;
   \item[(f)] $\pi({\sv(X)})$ is discrete.
\end{itemize}
\end{teo}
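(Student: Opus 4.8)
The plan is to run a single cycle of implications, keeping the cardinality statements (a), (b) and the discreteness statements (c)--(f) on separate tracks that meet only at two genuinely analytic points. First I would dispose of the formal equivalences. The equivalence (a)$\Leftrightarrow$(b) is immediate from Remark~\ref{rem1}: since $\sv(X)$ is a Borel (indeed $F_{\sigma\delta}$) subset of the Polish group $X$, the perfect-set property forces $\sv(X)$ to be either countable or of size $\cc$, so $|\sv(X)|<\cc$ holds exactly when $\sv(X)$ is countable. Next, because $\pi(\sv(X))$ is by definition a Polish group topology, (a)$\Leftrightarrow$(f) is the standard fact that a Polish group is discrete if and only if it is countable: a nonempty countable Polish (hence Baire) group cannot be a union of nowhere-dense singletons, so it has an isolated point, and a single isolated point in a topological group forces discreteness; conversely a discrete separable group is countable.

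The inclusions $\tau\subseteq\tau_{\vs}^*\subseteq\tau_{\vs}$ together with the identity $\pi(\sv(X))=\tau_{\vs}\restriction_{\sv(X)}$ then give all the one-line implications among the topological conditions. Indeed (d)$\Rightarrow$(c) and (d)$\Rightarrow$(e) because a topology finer than a discrete one is discrete and a restriction of a discrete topology is discrete; (c)$\Rightarrow$(f) because $\pi(\sv(X))$ is the restriction of $\tau_{\vs}$; and (e)$\Rightarrow$(f) because $\tau_{\vs}^*\restriction_{\sv(X)}\subseteq\tau_{\vs}\restriction_{\sv(X)}=\pi(\sv(X))$, so discreteness of the smaller topology yields discreteness of $\pi(\sv(X))$.

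These reductions leave exactly one substantial implication to close the cycle, namely (a)$\Rightarrow$(d), and here I would invoke Theorem~A. If $\sv(X)$ is countable then it is an $F_\sigma$-subgroup, being the countable union of its singletons, so Theorem~A gives $\sv(X)\in\tau_{\vs}^*$; that is, $\sv(X)$ is a $\tau_{\vs}^*$-open subgroup of $X$. It then suffices to prove that the subspace topology $\tau_{\vs}^*\restriction_{\sv(X)}$ is discrete: once $\{0\}$ is open in the open subgroup $\sv(X)$, it is open in $X$, and by translation every singleton is $\tau_{\vs}^*$-open, giving (d). To obtain discreteness of $\tau_{\vs}^*\restriction_{\sv(X)}$ I would use that $(X,\tau_{\vs}^*)$ is a Baire group, so that its $\tau_{\vs}^*$-open subspace $\sv(X)$ is a countable Baire topological group and hence discrete by the same isolated-point argument used for (a)$\Rightarrow$(f). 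With this in hand the bookkeeping closes everything: (a)$\Rightarrow$(d)$\Rightarrow$(c)$\Rightarrow$(f)$\Rightarrow$(a) and (d)$\Rightarrow$(e)$\Rightarrow$(f), together with (a)$\Leftrightarrow$(b).

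The delicate point, and the one I expect to be the main obstacle, is precisely the Baire (equivalently, complete-metrizability) property of the test topology $\tau_{\vs}^*$ needed in the last step. Unlike $\tau_{\vs}$, whose completeness is recorded in the excerpt, the corresponding property for $\tau_{\vs}^*$ has to be extracted from its construction in Definition~\ref{DefTestTopo} (as carried out in \cite{DI_}). Should that property be awkward to verify directly, an alternative route for the final step would be to show that on the characterized subgroup the test topology already coincides with $\tau_{\vs}$, so that $\tau_{\vs}^*\restriction_{\sv(X)}=\pi(\sv(X))$ is discrete by (f); either way, all the remaining difficulty is concentrated in understanding $\tau_{\vs}^*$ on $\sv(X)$.
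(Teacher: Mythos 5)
First, a point about the ground truth: the paper contains \emph{no} proof of this theorem at all --- it is quoted from \cite{DI_} --- so your proposal can only be judged on its own terms, using the paper's stated results as tools. Your bookkeeping is correct: (a)$\Leftrightarrow$(b) via the perfect set property, (a)$\Leftrightarrow$(f) via Baire category in the Polish group $(\sv(X),\pi(\sv(X)))$, and the implications (d)$\Rightarrow$(c), (d)$\Rightarrow$(e), (c)$\Rightarrow$(f), (e)$\Rightarrow$(f) from $\tau\subseteq\tau_{\vs}^*\subseteq\tau_{\vs}$ and $\pi(\sv(X))=\tau_{\vs}\restriction_{\sv(X)}$. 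Moreover, the point you flag as the main obstacle is in fact harmless. Since $x\mapsto\varrho_\vs(x,0)$ is a supremum of $\tau$-continuous functions, it is $\tau$-lower semicontinuous, so closed $\varrho_\vs$-balls are $\tau$-closed and
\begin{equation*}
B^{\varrho_\vs}_{1/n}(0)\ \subseteq\ W_n\ \subseteq\ \left\{x\in X:\varrho_\vs(x,0)\le\tfrac{1}{n}\right\}\ \subseteq\ B^{\varrho_\vs}_{1/(n-1)}(0).
\end{equation*}
Thus the $W_n$ and the $\varrho_\vs$-balls generate the same filter at $0$, so (taking $\delta$ invariant, as one may) $\tau_{\vs}^*=\tau_{\vs}$; in particular $\tau_{\vs}^*$ is completely metrizable, hence Baire, and $\tau_{\vs}^*\restriction_{\sv(X)}=\pi(\sv(X))$. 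Both of your proposed ways of finishing are therefore available.

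The genuine gap lies in the one step you took on faith, namely the appeal to Theorem A, and it cannot be repaired, because the implication (a)$\Rightarrow$(d) is \emph{false} at the stated level of generality. Take $X=\T$ and the strictly increasing, faithfully indexed sequence $\us$ interleaving $u_{2n}=n!$ and $u_{2n+1}=n!+1$ (say $n\ge2$). If $x\in\tu(\T)$ then $n!x\to0$ and $(n!+1)x\to0$, so the constant sequence $x=(n!+1)x-n!x$ tends to $0$, i.e.\ $x=0$; hence $\tu(\T)=\{0\}$ and (a), (b), (e), (f) all hold. But for $x=1/N!$ one checks $\lb u_kx\rb\le 2/N$ for every $k$ (terms with $n\ge N$ give $0$ or $1/N!$, terms with $n<N$ give at most $1/N+1/N!$), so $\varrho_\us(1/N!,0)\le 2/N$ and every ball $B^{\varrho_\us}_{\ep}(0)$ is nontrivial: $\tau_{\us}$, and a fortiori the coarser $\tau_{\us}^*$, is \emph{not} discrete, so (c) and (d) fail. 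The same example shows $\{0\}\in\F(\T)$ is not $\tau_{\us}^*$-open, refuting the forward direction of Theorem A itself (the constant sequence $\vs=(1,1,\dots)$, for which $\varrho_\vs=d$, is an even cheaper such example). The root cause is that the Baire category argument behind Theorem A only yields a \emph{relatively} open piece $B^{\varrho_\vs}_{1/n}(x_0)\cap\sv(X)\subseteq F_{m_0}$, and the paper's passage to the full ball $\overline{B^{\varrho_\vs}_{1/n}(x_0)}\subseteq F_{m_0}$ is unjustified; a correct $F_\sigma$-test must use the sets $\overline{B^{\varrho_\vs}_{1/n}(0)\cap\sv(X)}$ in place of $W_n$, and with that correction one recovers only (a)$\Leftrightarrow$(b)$\Leftrightarrow$(e)$\Leftrightarrow$(f) together with (c)$\Leftrightarrow$(d)$\Rightarrow$(a). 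The converse (a)$\Rightarrow$(c) genuinely needs extra hypotheses on $\vs$ (for instance it holds for strictly increasing integer sequences with bounded ratios, by Proposition B, and presumably \cite{DI_} imposes some such restriction); since your argument never invokes any property of $\vs$ beyond what the statement supplies, it cannot close this gap --- the delicate point is real, but it is not where you feared.
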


\begin{rem}\label{RemXXX}
 The above theorem yields that in general $\us\approx\vs$ does not imply \NB  $\us\sim\vs$. Indeed, all countable subgroups are characterized (by Theorem \ref{ThDK}) and 
 have the same associated topology, namely the discrete one (by Theorem \ref{ex_B}). 
\end{rem}

% Following \cite{DG}, call a subgroup $H$ of a compact abelian group $X$ {\em countable modulo compact} (briefly, {\em CMC}) if $H$ has a compact $G_\delta$-subgroup $K$ such that $H/K$ is countable.
% Clearly, a CMC subgroups are  characterized subgroup and $F_\sigma$ (as we can see in the next proposition). It is proved in  \cite{DG}, that every characterized subgroup $H$ of $X$ is CMC if and only if $X$ has finite exponent.
% As we revealed in advanced, the following Theorem refines the description of $\sLC(X)$.

\subsection{Results for the circle group}

The  subgroups of $\CH(\T)$ are characterized by sequences $\us$ of non-zero integers. A useful feature of such a sequence is  the sequence of ratios $(q_n)=(\frac{u_n}{u_{n-1}})$. Furthermore, 
one can define  
$$
q_\us: = \limsup_n {q_n}, \  \ Q_\us: = \sup_n q_{n}, \ \ S_\us= \{m\in \N: q_m = Q_\us\}  \ \mbox{  and  } \ S_\us ^*= \{m_k\in \N: q_{m_k} = q_\us\}.
$$
Clearly,   $q_\us \leq Q_\us$ and $S_\us ^*$ is infinite when $Q_\us< \infty$. Moreover, $q_\us = Q_\us$ precisely when $S_\us$ is infinite (and this case $S_\us = S_\us ^*$). The sequence of ratios $(q_n)$ is bounded precisely when  $q_\us< \infty$, this occurs if and only if   $Q_\us< \infty$.  

By Theorem \ref{TEgg}, the subgroup $\tTu$ is countable, when $Q_\us < \infty$. 
Hence, the topology $\tau_{\us}$ restricted to $\tTu$ is discrete, and actually  $(\T, \tau_{\us})$ is discrete, by Theorem \ref{ex_B}. In addition we can specify which balls are trivial.

\begin{nteo}{Proposition B} If $\us$ is a strictly increasing sequence of positive integers with $Q_\us < \infty$, 
then $\tau_{\us}$ is discrete. More precisely, $B_{\frac{1}{2 Q_\us}}^{\varrho_\us}(0) = \{0\}$ in $\T$. 
\end{nteo}

Now we investigate in more detail the case of an a-sequence $\us$ with $q_\us < \infty$. Since the ratios $q_n$ are integers, also $q_\us$ and $Q_\us$ are integers. 
 We shall study the ball $B_{\frac{1}{q_\us }}^{\us}(0)$. 
% when  $\us$ is an a-sequence with 
%There exists a co-finite subsequence $\vs$ of $\us$, such that $Q^\vs = q^\vs = q_\us$. 
%one can assume that the set $S_C = \{m\in \N: q_m = C\}$ is infinite. 
%Indeed, $\varrho_\vs \leq \varrho_\us$, so $B_{\varepsilon}^{\varrho_\us}(0) \subseteq B_{\varepsilon}^{\varrho_\vs}(0) $ for every $\ep >0$. 
%So if $B_{\varepsilon}^{\varrho_\vs}(0) $ is finite (trivial) for some ${\varepsilon}> 0$, then the same will hold for $B_{\varepsilon}^{\varrho_\us}(0) $. 
%On the other hand, we shall see that if  $|B_{1/C}^{\varrho_\us}(0)|=\mathfrak c$, then the same will hold for $B_{1/C}^{\varrho_\us}(0) $. 
%
It turns out that the closed $\varrho_\us$-ball $\{x\in \T: \varrho_\us(x,0) \leq 1/q_\us\}$ centered at 0 has size $\mathfrak c$. Moreover, if $S_\us^*$ is a big set in $\N$ (i.e., the gaps between two consecutive elements of $S_\us^*$ are bounded),  then actually $|B_{\frac{1}{q_\us}}^{\us}(0)| = \mathfrak c$.

\begin{nteo}{Proposition C}
Let $\us$ be an a-sequence with bounded ratio sequence and let $\delta_k:= m_{k+1}-m_k$, where $S_\us ^*= \{m_k\in \N: q_{m_k} = q_\us\}$. 
\begin{itemize}
\item[(a)] there are $\mathfrak c$ many elements $x\in \T$ with $\varrho_\us(x,0) = \frac{1}{q_\us}$ when the sequence $\delta_k$ is unbounded; 
\item[(b)]  if the sequence $\delta_k$
 %of differences $m_{k+1}-m_k$ in $S_\us ^*= \{m_k\in \N: q_{m_k} = q_\us\}$
  is bounded, then $\left|B_{\frac{1}{q_\us}}^{\varrho_\us}(0)\right|=\cc$.
\end{itemize}
%  the closed $\varrho_\us$-ball $\{x\in \T: \varrho_\us(x,0) \leq 1/q_\us\}$ centered at 0 has size $\mathfrak c$; 
 \end{nteo}

Taking an a-sequence with $q_\us = Q_\us$ and $S_\us = S_\us^*$ big in  Proposition C will show that the radius $\frac{1}{2Q^\us }$ in Proposition B cannot be increased to $\frac{1}{Q^\us}$.

The following question arises naturally from the two previous propositions.

\begin{que}
 Let $\us\in\Z^\N$ have bounded sequence of ratios. What can be said about the size of $B_{\varepsilon}^{\varrho_\us}(0)$, whenever $\frac{1}{2Q^\us}<\varepsilon<\frac{1}{q_\us}$?
\end{que}

In \S\ref{Stors}, as we previously announced, for a-sequences in $\T$, we will find a specific subgroup of $\tTu$ witnessing the separability of $(\tTu,\tau_{\us})$. This subgroup is $t(\tTu)$, i.e., the torsion subgroup of $\tTu$. This need not be true if $\us$ is not an a-sequence as the next remark shows. \NB

\begin{exa}\label{Rem_} If $\us$ is not an a-sequence, then $t(\tTu)$ need not be dense in $(\tTu,\tau_\us)$. \NB 
This may occur if the ratio sequence $(q_n)$ is divergent (so $\tTu$ is uncountable) and $t(\tTu)$
is finite (hence non-dense respect to any Hausdroff group topology on $\T$,  so non-dense in $(\T,\tau_\us)$ either).  
An example to this effect is the sequence of prime numbers $\us$. %In order to find such a sequence $\us$, it suffices to take each $u_n$ to be prime with $u_n > nu_{n-1}$. 
 Then $t(\tTu)$ is trivial, by Proposition \ref{pTorStruc}. 
  %Let $\us\in\PP^\N$ such that $u_n\ge nu_{n-1}$, then $t(\tTu)$ is not dense in $\tTu$.
 %\item[(b)] Still by Proposition \ref{pTorStruc}, every subgroup characterized by a subsequence of the Fibonacci sequence such that $q_n$ is divergent, has a non-dense torsion subgroup.
%\end{itemize}
\end{exa}

\begin{nteo}{Proposition D}
  If $\us$ is an a-sequence, then $t(\tTu)$ is a dense subgroup of $(\tTu, \tau_{\us}\restriction _{\tTu})$. 
\end{nteo}

\begin{nteo}{Theorem E} 
 
The following are equivalent for an a-sequence $\us$ in $\Z$:
\begin{itemize}
    \item[(a)] $(q_n)$ is bounded;
    \item[(b)] $\tTu\le\Q/\Z$;
    \item[(c)] $\tTu$ is countable;
    \item[(d)] $\tTu\in \F(\T)$;
    \item[(e)] $\tTu$ is $\tau_\us^*$-open;
    \item[(f)] $\tTu$ is $\tau_\us$-open.
\end{itemize}
\end{nteo}
%In case $(b_n)$ is splitting, they imply \begin{itemize}
%\item[(e)]  $t_{\underline{m}}(\T)$ is not  divisible. \end{itemize}
%In case $\Q/\Z\subseteq t_{\un m}(\T)$ all five conditions are equivalent. \end{teo}

\begin{ncor}{Corollary E} If $\us$ is an a-sequence with unbounded sequence of ratios, then $\tTu\notin\F(\T)$.
\end{ncor}

This corollary covers the results from \cite{BKR} and  \cite{Ga2} with $\us=(2^{2^n})$ and $\us=(n!)$.

Some implications of Theorem E hold for a general sequence of integers $\us$, while others are no more valid. For more details see  \cite{DI_}, where 
the main results of this paper (as well as  some related results from \cite{DH}) were announced without a proof.

\section{Proofs}\label{SPol}

\begin{notation} Let $\us=(u_n)$ be a sequence of characters of $X$, $\varrho_\us$ be as in Definition \ref{DefPolTopo}, $\|\;\|_{\varrho_\us}$ the norm induced by $\varrho_\us$, $B_\ep^{\varrho_\us}(0)$ be the $\varrho_\us$-ball of radius $\ep$ around 0 in $X$, $\tau_{\us}$ be the topology induced by $\varrho_\us$ in $X$ and  $\tau_{\us}\restriction\tTu$ its restriction on $\tTu$. \end{notation}

% ----------------------------------------   T e s t   T o p o l o g y -----------------------------------------

\subsection{The general case of compact metrizable abelian groups}\label{Stest}

Now we define the $F_\sigma$-test topology $\tau_{{\vs}}^*$.
% coarser  than $\tau_{{\vs}}$.
	 
	\begin{defin}\label{DefTestTopo} For a sequence  $\vs$ of characters of a compact metrizable abelian group $(X,\tau)$,  $\tau_{\vs}^*$ is the group topology on $X$ with
 filter of neighborhoods of $0$ in $X$ is generated by the family
	\begin{equation}\label{***}
	\left\{W_n=\overline{B^{{\varrho_\vs}}_{1/n}(0)}:n\in\N\right\}. 
	\end{equation}
\end{defin}
	\begin{rem}\label{TestTopo}
 Let us see that $\tau_{\vs}^*$ is a metrizable group topology with $ \tau\subseteq\tau_\vs^*\subseteq\tau_\vs$. To prove that $\tau_\vs^*$ is a metrizable group topology, it suffices to note that (\ref{***}) is a countable decreasing chain of symmetric sets, such that 
$$
W_{2n} + W_{2n} = \overline{B^{{\varrho_\vs}}_{1/2n}(0)} +\overline{B^{{\varrho_\vs}}_{1/2n}(0)} 	\subseteq  \overline{ B^{{\varrho_\vs}}_{1/2n}(0)  + B^{{\varrho_\vs}}_{1/2n}(0)}   \subseteq \overline{ B^{{\varrho_\vs}}_{1/n}(0) }   = W_n.	
$$
	
	For the inclusion $\tau_\vs^*\subseteq\tau_\vs$ it suffices to note that for every $W_n$  there exists $m>n$ such that $B_{1/m}^{\varrho_\vs}(0)\subseteq W_n$. 
	
	To prove the inclusion $\tau\subseteq\tau_\vs^*$ take $0 \in U\in\tau$. Then $U$ contains a closed ball $\overline{B_{1/n}^\delta(0)}$ in $\tau$  for some $n\in\N$,  
	where $\delta$ is a compatible metric with $\tau$. Furthermore, $ B_{1/n}^{\varrho_\vs}(0)\subseteq B_{1/n}^\delta(0)$, as $\varrho_\vs(x,y)
	\ge\delta(x,y)$ for every $x,y\in X$, according to (\ref{def_ro}). Hence, $W_n \subseteq \overline{B_{1/n}^\delta(0)}\subseteq U$, so $U \in \tau_\vs^*$. 
	
	%For the metrizability of $\tau_{\varrho}^*$, it suffices to note that by construction $\tau_{\varrho}^*$ has a countable base of neighbourhood in $0$.
	%
%	To prove that $\tau_\varrho^*$ is a group topology, one can proceed in this way. Let $f(x,y)=xy^{-1}:X\times X\to X$ and $A=f^{-1}(W_n)$ for a certain $n\in\N$. By the previous inclusions, $A$ is an open subset in the $\tau_\varrho$-product topology  $\tau_\varrho\times\tau_\varrho$. Hence, for all $(x,y)\in A$ there exist two $\tau_\varrho$-balls $B_x$ and $B_y$ such that $(x,y)\in B_x\times B_y\subseteq A$. Therefore $A=\bigcup_{(x,y)\in A}B_x\times B_y$. Since $A$ is obviously also $\tau$-closed, the following holds \[A=\overline{\bigcup B_x\times B_y}\supseteq\bigcup \overline{B_x}\times\overline{B_y}\supseteq \bigcup B_x\times B_y=A.\]
%	This proves that $A$ is a $(\tau_\varrho^*\times\tau_\varrho^*)$-open.	
\end{rem}		
% \begin{teo}\label{TfurTop}
%   Let $\vs$ a sequence of characters of a compact abelian group $(X,\tau)$. Then $\sv(X)\in\SF(X)$ if and only if $\sv(X)\in\tau_{\varrho_\us}^*$.
% \end{teo}
\medskip
\noindent{\bf Proof of Theorem A.}
  Suppose that $ \sv(X)= \bigcup_m F_m$ for some closed sets $F_m$ of $(X,\tau)$. As $ \sv(X)$ is a subgroup, we can assume without loss of generality that $F_m = -F_m$ is symmetric for  each $m\in \N$. Obviously, these $F_m$ are ${\tau_\vs}$-closed as well. So, applying the Baire category theorem to the Polish space $(\sv(X),{\tau_\vs})$ we deduce that  $F_{m_0}$ has a non-empty interior for some $m_0$. Since  $F_{m_0}$ is closed in $(X,\tau)$, there exists some $x_0 \in F_{m_0}$ and  $n\in\N$ so that 
\begin{equation}\label{eq1}
\overline{B_{1/n}^{\varrho_\vs} (x_0) }= x_0 +\overline{B_{1/n}^{\varrho_\vs} (0) } \subseteq F_{m_0}\subseteq \sv(X).
\end{equation}
Hence $\sv(X)$ is $\tau_\vs^*$-open, since it is a subgroup.

Converse, let $\sv(X)\in\tau_\vs^*$. Thence there exists $n\in\N$ such that $W_n\subseteq\sv(X)$. Let $D$ be a $\tau_\vs$-dense countable subset of $\sv(X)$. Then $D$ is also $\tau_\vs^*$-dense by Remark \ref{TestTopo}, so $D+W_n=\sv(X)$. Hence, $\sv(X)=\bigcup_{d\in D}d+W_n$. Each $d+W_n$ is $\tau$-closed, as $W_n$ are $\tau$-closed.  Therefore, $\sv(X)$ is an $F_\sigma$-set. 
\hfill$\Box$\medskip

\medskip\noindent{\bf Proof of Corollary A1.}
 Let $H:=\sv(X)=\su(X)$. Recall that $\tau_{\vs}$ and $\tau_{\us}$ coincide when restricted to $H$, since the topology witnessing the polishability of $H$ is unique. Moreover, as $H\in\F(X)$, $H$ is $\tau_{\vs}^*$-open  and hence also $\tau_{\vs}$-open. 
Analogously, $H$ is $\tau_{\us}$-open. Hence  $\tau_{\vs}$ and $\tau_{\us}$ coincide on a subgroup 
that is  open in both topologies. Therefore, they coincide in $X$. 
\hfill$\Box$\medskip

\medskip\noindent{\bf Proof of Corollary A2.}
(a) $\leftrightarrow $ (b)  The group $(X,\tau)$ is Polish and $\tau_\vs\geq \tau$. Therefore, $(X,\tau_\vs)$ is Polish if and only if $\tau=\tau_\vs$. 

(b) $\to $ (c) Assume that $\tau=\tau_\vs$ (i.e., $(X,\tau_\vs)$ is Polish). Then $H=\sv(X)$, is Polish in $\tau\restriction H = \tau_\vs\restriction H$. 
By Remark \ref{11Ott}, 
%This happens if and only if
 $H$ is $\tau$-closed, in particular $H\in\F(X,\tau)$. Theorem A implies $H\in\tau_{\vs}=\tau$, i.e., $H$ is also $\tau$-open. 
 
(c) $\to $ (d) First we note that the quotient $X/H$ is compact and discrete, and hence  finite. So $H$ is a finite index subgroup of $X$.  

Let $\mu$ be the normalized Haar measure in $X$. Since the $\tau$-open subgroup $H=\sv(X)$ has a finite index, one has $\mu(H)>0$. 
By Remark \ref{Rzeromeasure} $\vs$ has no faithfully indexed subsequences. If $v_n$ appears infinitely many times in $\vs$, then $v_n$ vanishes on $H$. Hence
$v_n$ factorizes through the quotient map $X \to X/H$. Since $X/H$ is finite, the image  $v_n(X)$ of $v_n$ is a finite subgroup of $\T$. If $m$ is the exponent of $v_n(X)$, 
then $mv_n= 0$, i.e., $v_n$ is torsion. 

(d) $\to$ (b) If $\vs=(v_n)$ has no faithfully indexed subsequences, then there exist finitely many characters $\left\{ v_{n_k}\mid0\le k \le m\right\}$
 that occur infinitely many times in the sequence $\vs$. Then each character $ v_{n_k}$ is torsion and vanishes on $H$, so $\sv(H)=\bigcap_{k=0}^m\ker v_{n_k}$ is $\tau$-closed.
Since each  $v_{n_k}$ is torsion, the subgroup $ v_{n_k}(X)$ of $\T$ is of finite exponent, hence it is finite. This proves that 
$\ker v_{n_k}$ is a finite index subgroup of $X$. Therefore, the $\tau$-closed subgroup $H$ has a finite index, so $H$ is $\tau$-open. 
This proves that $H=\sv(X)$ is also $\tau_\vs$-open. Moreover, being $H$-closed, $\tau\restriction H=\pi(H)=\tau_\vs\restriction H$ by Remark \ref{11Ott}. 
Therefore, $\tau$ and $\tau_\vs$ coincide on the subgroup $H$ which is open in both topologies. Hence, $\tau=\tau_\vs$. \hfill$\Box$\medskip

\subsection{Description of $\tTu$ for an a-sequence $\us$}\label{secaseq}

Throughout this section $\us=(u_n)$ will denote an a-sequence.

The following fact, providing a canonical representation of $x\in [0,1)$, is well-known (see for example \cite{Pe}). 

\begin{defin}[Canonical Representation with respect to $\us$]\label{Def_canonical}
		Let $x\in [0,1)$. Then exists a unique sequence $(c_n)\in\Z^{\N}$ such that $0\leq c_n<q_n$ for every $n$,
	\begin{equation}\label{canonical}
		x= \sum_{n=1}^\infty\frac{c_n}{u_n}, 
	\end{equation}
and $c_n<q_n-1$ for infinitely many $n$. %Call this representation \emph{canonical representation}. 
	\end{defin}

\begin{rem} \label{NewRem} If (\ref{canonical}) is a canonical representation, then 
$$
\left(\frac{c_{n{}+1}}{q_{n{}+1}}+\cdots+\frac{c_{n+t}}{q_{n+1}\cdots q_{n+t}}+\cdots\right)<1
$$
for all $t,n \in \N$. 
\end{rem}

\begin{notation}
For $x\in\T$ with canonical representation (\ref{canonical}), let
\begin{itemize}
  \item $\SUX=\{n\in\N\mid c_n\neq0\}$ and 
  %\item $\SUQX=\{n\in\N\mid c_n=q_n-1\}$.
\end{itemize}
	
Call an infinite set $A$ of naturals 
\begin{itemize}
  \item \NB {\em $\us$-bounded} if the sequence $\{q_n: n\in A\}$ is bounded;
  \item \NB {\em $\us$-divergent} if the sequence $\{q_n: n\in A\}$ diverges to infinity.
\end{itemize}
\end{notation}

The following results are corollaries of Theorem \cite[Theorem 2.3]{DI} covering the particular cases used here. That theorem in full generality
is not used here.

\begin{teo}\label{bounded}{\rm \cite[Corollary 2.4]{DdS}} Let $x\in\T$. If $\SUX$ is \NB $\us$-bounded, then the following are equivalent: 
\begin{itemize}
\item[(i)] $x\in \tTu$;
\item[(ii)] $c_n=0$ for almost all $n\in \N$.
\end{itemize}
\end{teo}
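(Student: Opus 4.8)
The plan is to reduce the condition $u_N x\to 0$ in $\T$ to the behaviour of the ``tails'' of the canonical representation of $x$, and then to play the $\us$-boundedness of $\SUX$ against the defining feature of the canonical representation, namely that infinitely many digits are sub-maximal. First I would record the key computation: writing $x=\sum_{n\ge1}c_n/u_n$ as in (\ref{canonical}), for each $N$ the partial sum $\sum_{n\le N}c_n u_N/u_n$ is an integer, since $u_n\mid u_N$ for $n\le N$. Hence
$$
u_N x \equiv r_N:=\sum_{n>N}\frac{c_n}{q_{N+1}\cdots q_n}\pmod 1,
$$
and $r_N\in[0,1)$ by Remark \ref{NewRem}, so $r_N$ is exactly the representative of $u_N x$ in $[0,1)$ and $\lb u_N x\rb=\min(r_N,1-r_N)$. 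Thus $x\in\tTu$ if and only if $\min(r_N,1-r_N)\to 0$. I would also record the recursion $r_N=(c_{N+1}+r_{N+1})/q_{N+1}$ and the trivial bound $r_N\ge c_{N+1}/q_{N+1}$, both of which are used repeatedly.

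The implication (ii)$\Rightarrow$(i) is then immediate and uses no hypothesis on $\SUX$: if $c_n=0$ for all $n>M$, then $r_N=0$ for all $N\ge M$, so $u_N x\to 0$.

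For (i)$\Rightarrow$(ii) I would argue by contraposition: assume $\SUX$ is infinite (and $\us$-bounded, say $q_n\le B$ for all $n\in\SUX$) and show $r_N\not\to 0$ in $\T$. Since $\SUX$ is infinite there are infinitely many $N$ with $N+1\in\SUX$, and for each of these $c_{N+1}\ge1$ and $q_{N+1}\le B$, so $r_N\ge c_{N+1}/q_{N+1}\ge 1/B$. Hence along this subsequence $r_N$ is bounded away from $0$, so \emph{if} $\lb u_N x\rb\to 0$ then necessarily $r_N\to 1$ along it. The crux is to show this contradicts the canonical representation. For large such $N$ we have $r_N>1-\tfrac1{2B}$; then $c_{N+1}+r_{N+1}=q_{N+1}r_N>q_{N+1}-\tfrac12$ using $q_{N+1}\le B$, which forces the maximal digit $c_{N+1}=q_{N+1}-1$ and then $r_{N+1}>\tfrac12$. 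In turn $c_{N+2}+r_{N+2}=q_{N+2}r_{N+1}>\tfrac{q_{N+2}}2\ge1$ (as $q_{N+2}\ge2$), so $c_{N+2}\ge1$, i.e. $N+2\in\SUX$. Therefore, for all large $n$, $n\in\SUX\Rightarrow n+1\in\SUX$ and $c_n=q_n-1$. As $\SUX$ is infinite this makes it cofinite with $c_n=q_n-1$ for all large $n$, contradicting the requirement in Definition \ref{Def_canonical} that $c_n<q_n-1$ for infinitely many $n$. This contradiction gives $x\notin\tTu$, as required.

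The main obstacle is exactly the possibility $r_N\to 1$ rather than $r_N\to 0$: the lower bound $r_N\ge1/B$ only keeps $r_N$ away from $0$, and in $\T$ one must also rule out the tails accumulating at $1$. The digit-propagation step above is what handles this, and it is where both hypotheses are essential: the uniform bound $q_n\le B$ on $\SUX$ turns ``$r_N$ close to $1$'' into ``$c_{N+1}$ maximal and the next tail $>\tfrac12$'', while the canonical-representation constraint forbids an eventually all-maximal tail. The thresholds $1-\tfrac1{2B}$ and $\tfrac12$ are chosen precisely so that $q_{N+1}\le B$ forces the maximal digit and propagates membership in $\SUX$ one step forward.
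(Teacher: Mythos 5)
Your proposal is correct, but it cannot be matched line-by-line against the paper, because the paper gives no proof of this statement: Theorem \ref{bounded} is quoted from \cite[Corollary 2.4]{DdS}, presented as a special case of the general description of $\tTu$ in \cite[Theorem 2.3]{DI}, and neither is reproved in the text. Your argument is a sound, self-contained substitute, in the same canonical-digit spirit as those references. The reduction to the tails $r_N=\sum_{n>N}c_n/(q_{N+1}\cdots q_n)$, with $u_Nx\equiv r_N \pmod 1$ and $r_N\in[0,1)$ guaranteed by Remark \ref{NewRem}, is the right normal form, and you correctly isolate the only delicate point: $\lb u_Nx\rb\to0$ forces merely $\min(r_N,1-r_N)\to0$, so along the subsequence $\{N:\,N+1\in\SUX\}$, where $r_N\ge 1/B$, one must exclude $r_N\to1$. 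Your digit-propagation step does exactly that, and the arithmetic checks out: from $r_N>1-\frac{1}{2B}$ and $q_{N+1}\le B$ one gets $c_{N+1}+r_{N+1}=q_{N+1}r_N>q_{N+1}-\frac12$, which (as $c_{N+1}\le q_{N+1}-1$ and $r_{N+1}<1$) forces $c_{N+1}=q_{N+1}-1$ and $r_{N+1}>\frac12$, whence $c_{N+2}+r_{N+2}=q_{N+2}r_{N+1}>1$ and $N+2\in\SUX$. The induction genuinely closes because the threshold $r_M>1-\frac{1}{2B}$ was established for \emph{every} sufficiently large $M$ in the fixed set $\{N:\,N+1\in\SUX\}$, and the step produces precisely such an $M=N+1$; so $\SUX$ becomes cofinite with eventually maximal digits, contradicting the clause $c_n<q_n-1$ for infinitely many $n$ in Definition \ref{Def_canonical}. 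What the paper's citation buys is generality — the theorem of \cite{DI} also covers $\us$-divergent and mixed supports (cf.\ Theorem \ref{unbounded}) — while your proof buys self-containedness: for the $\us$-bounded case actually needed here, the result follows from first principles in half a page.
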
	
	
\begin{teo}[\cite{DI} Corollary 3.4]\label{unbounded} Suppose that $x\in\T$ has \NB $\us$-divergent support. Then  $x\in \tTu$ if and only if the following two conditions are satisfied: 
\begin{itemize}
    \item[(1)] \NB $\lim_{n\in\SUX}\frac{c_{n}}{q_{n}}=0$ in $\T$ and  
    \item[(2)] $\lim_{n\in I'}\frac{c_{n}}{q_{n}}=0$ in $\R$ for every infinite \NB $I'\subseteq \SUX$ such that $I'-1$ is \NB $\us$-bounded. 
\end{itemize}
\end{teo}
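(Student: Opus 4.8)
The plan is to reduce the whole statement to an explicit formula for the residues $u_nx$ modulo $\Z$ and then analyze their convergence digit by digit. Writing $x=\sum_{k\ge 1}c_k/u_k$ for the canonical representation and using $u_k=u_nq_{n+1}\cdots q_k$ for $k>n$, every term with $k\le n$ is an integer, so $u_nx\equiv r_n\pmod\Z$, where $r_n:=\sum_{k>n}c_k/(q_{n+1}\cdots q_k)$; by Remark \ref{NewRem} one has $r_n\in[0,1)$, hence $r_n$ is the canonical representative of $u_nx$ in $\T$ and $x\in\tTu\iff\lb r_n\rb\to 0$. Two features of $r_n$ will drive the argument: the one-step recursion $r_n=(c_{n+1}+r_{n+1})/q_{n+1}$ with $r_{n+1}\in[0,1)$, and, writing $f(n):=\min\{k>n:k\in\SUX\}$ for the first support index after $n$ (well defined since $\SUX$ is infinite, being $\us$-divergent), the telescoped form $r_n=(c_{f(n)}+r_{f(n)})/(q_{n+1}\cdots q_{f(n)})$. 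From $c_{f(n)}+r_{f(n)}<q_{f(n)}$ this yields at once the crude bound $r_n<1/(q_{n+1}\cdots q_{f(n)-1})$.

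For the forward implication assume $x\in\tTu$, i.e. $\lb r_n\rb\to 0$. Condition (1) is almost immediate: for $j\in\SUX$ we have $f(j-1)=j$, so $r_{j-1}=c_j/q_j+r_j/q_j$; since $q_j\to\infty$ along $\SUX$ (divergence of the support) the correction $r_j/q_j\to 0$, whence $c_j/q_j\to 0$ in $\T$. For condition (2), suppose toward a contradiction that for some infinite $I'\subseteq\SUX$ with $I'-1$ \us-bounded, say $q_{m-1}\le Q$ for $m\in I'$, we do not have $c_m/q_m\to 0$ in $\R$; by (1) we may pass to an infinite $I''\subseteq I'$ along which $c_m/q_m\to 1$, and, shrinking further, $q_{m-1}$ equals a constant $Q_0\le Q$. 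Then $r_{m-1}=(c_m+r_m)/q_m\to 1$, and from $q_{m-1}r_{m-2}=c_{m-1}+r_{m-1}$ with $\lb r_{m-2}\rb\to 0$ one excludes $r_{m-2}\to 0$ (the left side would tend to $0$ while the right side tends to $c_{m-1}+1\ge 1$), so $r_{m-2}\to 1$ and necessarily $c_{m-1}=q_{m-1}-1$. But then $m-1\in\SUX$ and $c_{m-1}/q_{m-1}=1-1/Q_0$ is bounded away from $0$ in $\T$ along the infinite set $I''-1\subseteq\SUX$, contradicting (1).

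For the converse assume (1) and (2) and suppose, for contradiction, that $\lb r_n\rb\ge\ep_0$ for all $n$ in some infinite set $N_0$. For large $n$ the no-gap case $f(n)=n+1$ is excluded: there $r_n=c_{f(n)}/q_{f(n)}+r_{f(n)}/q_{f(n)}$, and (1) together with $q_{f(n)}\to\infty$ forces $\lb r_n\rb<\ep_0$ (if $c_j/q_j$ is $\T$-close to $0$ then $r_n$ is small; if it is $\T$-close to $1$ then $r_n>1-\ep_0$). Hence for large $n\in N_0$ we have $j:=f(n)>n+1$, and the crude bound forces $q_{n+1}\cdots q_{j-1}\le 1/\ep_0$, in particular $q_{j-1}\le 1/\ep_0$. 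Since each support index has only finitely many $n$ with $f(n)=j$, the set $I':=\{f(n):n\in N_0\text{ large}\}\subseteq\SUX$ is infinite and $I'-1$ is \us-bounded, so (2) gives $c_j/q_j\to 0$ in $\R$ along $I'$. Then for $n\in N_0$ large, $r_n\le(c_j+1)/q_j=c_j/q_j+1/q_j<\ep_0$, contradicting $\lb r_n\rb\ge\ep_0$; therefore $\lb r_n\rb\to 0$ and $x\in\tTu$.

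I expect the main obstacle to be the forward direction for (2): one must recognize that condition (2) is exactly the hypothesis that blocks the downward \emph{carry} by which a digit close to $1$ tries to propagate, and that this block already occurs at the first step, precisely because a maximal digit $c_{m-1}=q_{m-1}-1$ sitting over a bounded ratio is incompatible with (1). A secondary care point is the bookkeeping in the converse, namely isolating the bad indices and verifying (via finiteness of the fibers of $f$) that they assemble into a single admissible set $I'$ to which (2) applies. Throughout, the recurring subtlety is to keep separate convergence in $\T$ (which only says a digit ratio is near $0$ or near $1$) from convergence in $\R$ (which pins it to $0$).
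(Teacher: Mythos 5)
Your argument is correct. Note first that this paper contains no internal proof to compare against: Theorem \ref{unbounded} is imported verbatim from \cite{DI} (Corollary 3.4 there), where it is obtained as a special case of a description of $\tTu$ valid for arbitrary, not necessarily $\us$-divergent, supports (\cite[Theorem 2.3]{DI}). So your proof is necessarily an independent route, and it is a sound and self-contained one for the divergent-support case. The reduction to the residues $r_n=\sum_{k>n}c_k/(q_{n+1}\cdots q_k)\in[0,1)$ with $u_nx\equiv r_n \pmod{\Z}$, the one-step recursion $r_n=(c_{n+1}+r_{n+1})/q_{n+1}$, and the telescoping across support gaps is exactly the natural mechanism, and the two delicate steps check out: in the forward direction, a digit ratio $c_m/q_m\to1$ sitting over a bounded ratio $q_{m-1}=Q_0$ forces $r_{m-1}\to1$, excludes $r_{m-2}\to0$ (since $Q_0r_{m-2}=c_{m-1}+r_{m-1}\ge r_{m-1}$), hence forces $r_{m-2}\to1$ and the maximal digit $c_{m-1}=Q_0-1$; in the converse, the fiber-finiteness of $n\mapsto f(n)$ correctly assembles the gap-ends into a single infinite $I'\subseteq\SUX$ with $q_{f(n)-1}\le 1/\ep_0$ (via the crude bound, which applies because you first ruled out $f(n)=n+1$ using (1) and $\us$-divergence). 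Two facts you use silently deserve an explicit line: (i) $q_n\ge2$ for all $n\ge1$ (strict increase plus divisibility), which guarantees $Q_0\ge2$ and hence $c_{m-1}=Q_0-1\neq0$, so that $m-1\in\SUX$ — if $Q_0=1$ were possible, $c_{m-1}=0$ and your contradiction with (1) would evaporate; and (ii) the strict inequality $r_n<1$, which is Remark \ref{NewRem} and rests on the canonical-representation clause that $c_n<q_n-1$ for infinitely many $n$. Finally, a small simplification: at the end of your forward step for (2) you need not invoke (1) at all — having produced infinitely many indices $m-1\in\SUX$ with $q_{m-1}=Q_0$ constant, you already contradict the standing hypothesis that $\SUX$ is $\us$-divergent.
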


%\begin{proof}
%Let $I = \SUX$. If $\varphi(x)\in\tTu$, then (1) holds true by item (b) of Theorem \ref{teo1} applied to $A=I$. Assume that $A=I'-1$ is $q$-bounded for some infinite $I'\subseteq I$. 
%Then $A\cap I$ is finite, as $I$ is $q$-divergent. Then by $(a_2)$ applied to $A$, $\lim_{n\in I'}\frac{c_{n}}{q_{n}}= \lim_{n\in A}\frac{c_{n+1}}{q_{n+1}}=0$ in $\R$. This proves  (2) and the necessity. 
%
%To establish the sufficiency, assume that (1) and (2) hold true. According to Theorem \ref{teo1}, to prove that $\varphi(x)\in\tTu$ we have to check (a) and (b). Since (b) immediately follows from (1), we a%\subsubsection{The Main Theorem}re left with (a). 
%Let $A$ be an infinite $q$-bounded set in $\N$. Then $A\cap I$ is finite, so we need to check only ($a_2$), i.e. $ \lim_{n\in A}\frac{c_{n+1}}{q_{n+1}}=0$ (since the final assertion of ($a_2$) follows from this equality, as mentioned in the final part of the proof of the necessity of ($a_2$)). Let $I' = (A+1) \cap I$. If this set is infinite, then (2) applies and we are done. If $I'$ is finite, we conclude that $c_n=0$ for almost all $n\in A+1$ and hence $\lim_{n\in A+ 1 }\frac{c_{n}}{q_{n}} = 0$.%$A \setminus I$ is infinite and therefore $\lim_{n\in A}\frac{c_{n+1}}{b_{n+1}}= \lim_{n\in (A+ 1) \setminus I }\frac{c_{n}}{b_{n}} = 0$, as $c_n=0$ for all $n (A+ 1) \setminus I $. 
%\end{proof}

The following result was established in \cite{DdS}: 

\begin{cor}\label{unbounded*} Suppose  $x\in\T$ has \NB $\us$-divergent support. Then  $x\in\tTu$  whenever $\lim_{n\in \SUX}\frac{c_{n}}{q_{n}}=0$ in $\R$.
\end{cor}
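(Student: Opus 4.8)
The plan is to deduce Corollary \ref{unbounded*} directly from Theorem \ref{unbounded}, by showing that the single hypothesis $\lim_{n\in\SUX}\frac{c_n}{q_n}=0$ in $\R$ forces both conditions (1) and (2) of that theorem. The essential observation is that convergence in $\R$ is strictly stronger than convergence in $\T$: the canonical map $\varphi:\R\to\T$ is continuous, so if a sequence tends to $0$ in $\R$ it tends to $0$ in $\T$ as well. This immediately gives condition (1), namely $\lim_{n\in\SUX}\frac{c_n}{q_n}=0$ in $\T$.

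For condition (2), I would argue that it is essentially a sub-statement of the hypothesis. Condition (2) demands that $\lim_{n\in I'}\frac{c_n}{q_n}=0$ in $\R$ for every infinite $I'\subseteq\SUX$ with $I'-1$ being $\us$-bounded. But the hypothesis already asserts that the \emph{full} indexed family $\left(\frac{c_n}{q_n}\right)_{n\in\SUX}$ converges to $0$ in $\R$. Since convergence of a net (here, the sequence indexed along the infinite set $\SUX$) to a limit passes to every subsequence, in particular it passes to the subfamily indexed by any infinite $I'\subseteq\SUX$. Thus $\lim_{n\in I'}\frac{c_n}{q_n}=0$ in $\R$ holds automatically, with no use at all of the condition that $I'-1$ is $\us$-bounded. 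So condition (2) follows, a fortiori, from the hypothesis.

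Having verified (1) and (2), I would invoke Theorem \ref{unbounded} (whose other standing hypothesis, that $x$ has $\us$-divergent support, is exactly the hypothesis of the corollary) to conclude $x\in\tTu$. The whole argument is therefore a short implication: the $\R$-convergence hypothesis simultaneously upgrades to the $\T$-convergence of (1) via continuity of $\varphi$, and restricts to the subset-convergence of (2) via the hereditary nature of limits along subsequences.

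There is essentially no obstacle here; the only thing to be careful about is the direction of the implication. The corollary is a one-way statement ($\Rightarrow$), not an equivalence, precisely because the hypothesis is \emph{stronger} than what Theorem \ref{unbounded} needs. The converse fails: membership $x\in\tTu$ only yields the $\T$-limit in (1) together with the restricted $\R$-limits in (2) for those $I'$ with $I'-1$ being $\us$-bounded, which does not force $\lim_{n\in\SUX}\frac{c_n}{q_n}=0$ in all of $\R$. Hence I would present this as a clean sufficient condition, taking care not to overstate it as a characterization.
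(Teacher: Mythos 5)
Your proof is correct and is essentially the paper's own route: the paper states Corollary \ref{unbounded*} without a written proof (attributing it to \cite{DdS}), presenting it as an immediate consequence of Theorem \ref{unbounded}, which is exactly your derivation — $\R$-convergence gives condition (1) via continuity of $\varphi$ and gives condition (2) a fortiori by passing to subsequences. Your closing remark that the implication is one-way (since condition (2) may be vacuous when no infinite $I'\subseteq\SUX$ with $I'-1$ $\us$-bounded exists) is also accurate.
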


The next Corollary,  following  obviously from the previous one, will be useful in the applications. 

\begin{cor}\label{unbounded**} Suppose  $x\in\T$ has \NB $\us$-divergent support. Then  $x\not\in\tTu$  whenever $\frac{c_{n}}{q_{n}}$ does not converge to 0 in $\T$.
\end{cor}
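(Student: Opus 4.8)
The statement follows directly from the necessity of condition (1) in Theorem \ref{unbounded}, and I would organize the argument around that observation. First I would note that, since $c_n = 0$ for every $n \notin \SUX$, the sequence $\left(\frac{c_n}{q_n}\right)_{n \in \N}$ converges to $0$ in $\T$ if and only if its restriction to $\SUX$ does, i.e. if and only if $\lim_{n \in \SUX} \frac{c_n}{q_n} = 0$ in $\T$. Thus the hypothesis of the statement --- that $\frac{c_n}{q_n}$ does not converge to $0$ in $\T$ --- is precisely the negation of condition (1) of Theorem \ref{unbounded}. Since that condition is necessary for membership in $\tTu$ when $x$ has $\us$-divergent support, the contrapositive yields $x \notin \tTu$.

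To make the argument self-contained (and to see why it ``follows obviously'' from the ratio estimate behind Corollary \ref{unbounded*}), I would instead compute $u_N x$ modulo $1$ explicitly. Writing $x = \sum_{n \ge 1} \frac{c_n}{u_n}$ and using that $\us$ is an a-sequence, $\frac{u_N}{u_n}$ is an integer for $n \le N$ (so those terms vanish modulo $1$) while $\frac{u_N}{u_n} = \frac{1}{q_{N+1}\cdots q_n}$ for $n > N$; hence
\begin{equation*}
 u_N x \equiv \frac{c_{N+1}}{q_{N+1}} + \theta_N \pmod 1, \qquad \theta_N := \frac{c_{N+2}}{q_{N+1}q_{N+2}} + \frac{c_{N+3}}{q_{N+1}q_{N+2}q_{N+3}} + \cdots,
\end{equation*}
and by Remark \ref{NewRem} one has $0 \le \theta_N < \frac{1}{q_{N+1}}$ and $r_N := \frac{c_{N+1}}{q_{N+1}} + \theta_N \in [0,1)$, so that $\lb u_N x\rb = \lb r_N \rb$.

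Now I would exploit the hypothesis: choose $\ep > 0$ and an infinite set of indices $n_k \in \SUX$ with $\lb \frac{c_{n_k}}{q_{n_k}}\rb \ge \ep$. Setting $N = n_k - 1$, we get $r_{n_k - 1} = \frac{c_{n_k}}{q_{n_k}} + \theta_{n_k - 1}$ with $0 \le \theta_{n_k - 1} < \frac{1}{q_{n_k}}$. Because $n_k \in \SUX$ and the support is $\us$-divergent, $q_{n_k} \to \infty$, so $\theta_{n_k - 1} \to 0$; the reverse triangle inequality for the norm on $\T$ then gives $\lb u_{n_k - 1} x\rb \ge \ep - \theta_{n_k - 1} \ge \frac{\ep}{2}$ for all large $k$. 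Hence $u_n x$ does not tend to $0$ in $\T$, that is, $x \notin \tTu$.

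The argument has no essential obstacle; the only points requiring care are bookkeeping ones. First, one must take the index $N+1$ to lie inside $\SUX$ (this is exactly why we choose $N = n_k - 1$): only then does $\us$-divergence of the support force $q_{N+1} = q_{n_k} \to \infty$ and thereby annihilate the tail $\theta_N$. Second, one must record that the terms outside the support vanish, so that ``non-convergence to $0$ in $\T$'' over $\N$ and over $\SUX$ mean the same thing, which is what makes the hypothesis coincide exactly with the failure of condition (1) of Theorem \ref{unbounded}.
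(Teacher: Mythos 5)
Your proposal is correct, and its first paragraph is essentially the paper's own (unwritten) proof: the paper dismisses the corollary as ``following obviously from the previous one,'' and the actual content of that remark is exactly your observation that, since $c_n=0$ for $n\notin\SUX$, the hypothesis is precisely the negation of condition (1) of Theorem \ref{unbounded}, which is necessary for membership in $\tTu$. Note that the paper's wording literally points at Corollary \ref{unbounded*}, from which the statement does \emph{not} follow (that corollary gives only a sufficient condition for membership, and its hypothesis --- convergence in $\R$ --- is not complementary to yours); your reading, via the necessity half of Theorem \ref{unbounded}, is the correct repair of this sloppiness. Your second, self-contained argument goes beyond the paper: the computation (splitting $u_{n_k-1}x$ modulo $1$ as $\frac{c_{n_k}}{q_{n_k}}+\theta_{n_k-1}$ with $0\le\theta_{n_k-1}<\frac{1}{q_{n_k}}\to 0$ by $\us$-divergence of the support, then using $\lb a+b\rb\ge\lb a\rb-\lb b\rb$ and $\lb\theta\rb\le\theta$) is correct, and it in effect reproves the needed necessity of condition (1) directly, buying independence from the external reference \cite{DI} at the cost of the bookkeeping you describe; the two points of care you flag (taking $N+1=n_k\in\SUX$, and the equivalence of non-convergence over $\N$ and over $\SUX$) are indeed the only delicate ones.
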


\subsection{Proofs for the circle group}

\subsubsection{The $\varrho_\us$-balls in $\T$}

 Now we recall some fact on the norm of the \NB torus and then we prove Proposition C. 

 \begin{rem}\label{rBalls} Since $\diam(\T)=\frac{1}{2}$, $\T=B_\ep^\varrho(0)\subseteq B_\ep^d(0)$ whenever $\ep\ge\frac{1}{2}$. 
\end{rem}
 
\begin{rem} \NB As already mentioned above, we identify elements of $\T$ with their \NB unique preimage in $[0,1)$ and use the inequality $\lb nx\rb\le nx$ (in $\R$) for $x\in\T$ and $n\in\N$.  If $nx\le \frac{1}{2}$, then $\lb nx\rb= nx$. 
\end{rem}

\medskip\noindent{\bf Proof of Proposition B.} Let $\ep=\frac{1}{2Q^\us}$ and
\begin{equation}\label{eq*}
x\in B_\ep^{\varrho_\us}(0). 
\end{equation}
Our aim is to prove that $x=0$. From (\ref{eq*}) we deduce that 
\begin{equation}\label{eq**}
\lb x\rb<\ep \ \mbox{ and }\  \lb u_nx\rb<\ep \ \mbox{ for all }\ n\in\N. 
\end{equation}

\noindent {\bf Claim 1.} {\em If $x<\ep$, then $\lb u_nx\rb=u_nx<\ep$ for all $n\in\N$.}

\smallskip

To prove the claim we argue by induction. For $n=0$, $u_0\leq Q^\us$, so  $u_0x\le Q^\us x<Q^\us \ep<\frac{1}{2}$. Hence, $\lb u_0x\rb=u_0x<\ep$. 
Assume that $n\ge0$ and $u_nx<\ep$ holds true. Then $u_{n+1}x\le Q^\us u_nx<Q^\us \ep<\frac{1}{2}$ and hence $u_{n+1}x<\frac{1}{2}$. Therefore, (\ref{eq*}) yields $\ep>\lb u_{n+1}x\rb=u_{n+1}x$. This concludes the proof of the claim.
%If $x<\ep$, by induction $u_nx<\ep$ for all $n\in\N$.
  
  \smallskip   
  \smallskip 

According to (\ref{eq**}), either  $x<\ep$, or $1-\ep < x<1$. In case $x<\ep$, Claim 1 yields $u_nx<\ep$ and hence $x<\frac{\ep}{u_n}$ for all $n\in\N$. Since $\us$ is strictly increasing, this entails $x=0$. 

 Let us see now that the case $1-\ep < x<1$ cannot occur. Indeed, if $1-\ep < x<1$, then $y=1-x>0$ satisfies $y\in B_\ep^{\varrho_\us}(0)$ and $0<y<\ep$. Hence, $y=0$ by the previous argument,  a contradiction. 
 \hfill$\Box$\medskip

  In order to demonstrate Proposition C we need the following lemma.
  
  \begin{lemma}\label{Lstima}
   Let $\us$ be a strictly increasing a-sequence with $q_\us < \infty$, let
 \begin{equation}\label{eqStype}
 S=\{n_1<n_2\cdots<n_k<\cdots\}\subset S_\us^* \text{ with } d_i = n_{i+1}-n_i\ge2 \mbox{ for } 1= 1,2, \ldots
 \end{equation}
and let 
 \begin{equation}\label{eqxsgen1} 
   x_S=\frac{1}{u_{n_1}}-\frac{1}{u_{n_2}}+\frac{1}{u_{n_3}}-\frac{1}{u_{n_4}}+\cdots . 
  \end{equation}
\begin{itemize}
\item[(a)]    If $n_k\le n<n_{k+1}$, then $\lb u_{n-1}x_S\rb\le\lb u_{n_k-1}x_S\rb$ and 
   \begin{equation}\label{NEWeq}
\frac{1}{ q_\us} - \frac{1}{2^{d_{k} -1} q_\us^2 }\le  \frac{1}{q_{n_k}}-\frac{1}{q_{n_k}\cdots q_{n_{k+1}}}\le\lb u_{n_k-1}x_S\rb \le
\frac{1}{q_{n_k}}-\frac{1}{q_{n_k}\cdots q_{n_{k+1}}}+\frac{1}{q_{n_k}\cdots q_{n_{k+2}}}.
   \end{equation}
\item[(b)]  $\varrho_\us(x_S,0) = \max\{\|x_S\|, \sup\{\lb u_{n_k-1}x_S\rb: k \in \N \} \}$. 
\item[(c)] If $S' \ne S$ is as in (\ref{eqStype}), then $x_S\ne x_{S'}$. 
\end{itemize}
\end{lemma}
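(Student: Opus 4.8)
The plan is to reduce each quantity $\lb u_m x_S\rb$ by computing $u_m x_S$ modulo $1$, using crucially that $\us$ is an a-sequence, so $u_{n_j}\mid u_m$ whenever $n_j\le m$. I would first treat the peak indices $m=n_k-1$. Since the gaps satisfy $d_i\ge 2$, every $n_j$ with $j<k$ obeys $n_j\le n_{k-1}\le n_k-2\le m$, so the terms $(-1)^{j+1}u_{n_k-1}/u_{n_j}$ with $j<k$ are integers and vanish in $\T$. Isolating the $j=k$ term, which equals $(-1)^{k+1}/q_{n_k}=(-1)^{k+1}/q_\us$ because $n_k\in S_\us^*$, and factoring $(-1)^{k+1}$ out of the tail, I would write
$$
u_{n_k-1}x_S\equiv(-1)^{k+1}\beta_k\pmod 1,\qquad \beta_k=\frac{1}{q_\us}-\frac{u_{n_k-1}}{u_{n_{k+1}}}+\frac{u_{n_k-1}}{u_{n_{k+2}}}-\cdots .
$$
As $\beta_k$ is an alternating series with strictly decreasing terms and leading term $1/q_\us\le\tfrac12$, it lies in $(0,1/q_\us]\subseteq(0,\tfrac12]$, whence $\lb u_{n_k-1}x_S\rb=\beta_k$. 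The standard two-sided alternating-series bounds (truncating after one and after two tail terms) yield precisely the middle double inequality of (\ref{NEWeq}), and the leftmost bound follows from $u_{n_k-1}/u_{n_{k+1}}=\tfrac{1}{q_{n_k}}\cdot\tfrac{1}{q_{n_k+1}\cdots q_{n_{k+1}}}\le 1/(2^{d_k-1}q_\us^{2})$, since the product $q_{n_k+1}\cdots q_{n_{k+1}}$ has $d_k$ factors, each $\ge 2$, with final factor $q_{n_{k+1}}=q_\us$.

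For the monotonicity assertion in (a) I would take $n_k<n<n_{k+1}$, i.e.\ $m=n-1$ with $n_k\le m\le n_{k+1}-2$. Now all terms with $j\le k$ are integers, the first surviving term is $j=k+1$, and the same factoring gives $u_m x_S\equiv(-1)^{k}\gamma_m\pmod 1$ with $0<\gamma_m\le u_m/u_{n_{k+1}}\le 1/(q_{n_{k+1}-1}q_{n_{k+1}})\le 1/(2q_\us)$. Comparing with the lower bound $\beta_k\ge 1/q_\us-1/(2q_\us^{2})\ge 3/(4q_\us)$ (valid for $q_\us\ge 2$) gives $\lb u_m x_S\rb=\gamma_m<\beta_k=\lb u_{n_k-1}x_S\rb$, as required. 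Part (b) is then immediate: by (a) each $\lb u_m x_S\rb$ with $m\ge n_1-1$ is dominated by some $\lb u_{n_k-1}x_S\rb$, while for the finitely many small indices $m<n_1-1$ the same estimate gives $\lb u_m x_S\rb\le 1/(2q_\us)<\beta_1$. Hence $\sup_m\lb u_m x_S\rb=\sup_k\lb u_{n_k-1}x_S\rb$, and substituting this into $\varrho_\us(x_S,0)=\max\{\lb x_S\rb,\sup_m\lb u_m x_S\rb\}$ gives the formula in (b).

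For the injectivity in (c) I would suppose $S\ne S'$ and let $i$ be the least index at which the enumerations differ, say $n_i<n_i'$ with $n_j=n_j'$ for $j<i$. Subtracting the two series and cancelling the common initial terms, $x_S=x_{S'}$ would force the tail series $\sum_{j\ge i}(-1)^{j+1}/u_{n_j}$ and $\sum_{j\ge i}(-1)^{j+1}/u_{n_j'}$ to coincide. Both are alternating with decreasing terms, so after multiplying by $(-1)^{i+1}$ they are positive; I would bound the first from below by $1/u_{n_i}-1/u_{n_{i+1}}\ge 3/(4u_{n_i})$, using $u_{n_{i+1}}\ge 2^{d_i}u_{n_i}\ge 4u_{n_i}$ (here the hypothesis $d_i\ge 2$ is essential), and the second from above by $1/u_{n_i'}\le 1/u_{n_i+1}\le 1/(2u_{n_i})$. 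The strict inequality $3/(4u_{n_i})>1/(2u_{n_i})$ contradicts equality, so $x_S\ne x_{S'}$.

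I expect the main obstacle to be the careful modulo-$1$ bookkeeping: verifying that exactly the intended initial terms of $u_m x_S$ vanish, which is where $d_i\ge 2$ enters by guaranteeing $n_{k-1}\le n_k-2$, and checking that the surviving alternating remainder stays within $(0,\tfrac12]$ so that its $\T$-norm equals its real value rather than its complement $1-\beta_k$. Once these reductions are pinned down, the inequalities in (a)--(c) are routine alternating-series estimates, with the gap condition $d_i\ge 2$ supplying the factor $2^{d_i}\ge 4$ of slack needed for the strict comparison in (c).
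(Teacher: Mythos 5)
Your proof is correct, and for parts (a) and (b) it is essentially the paper's own argument: reduce $u_mx_S$ modulo $1$ via the divisibility $u_{n_j}\mid u_m$ for $n_j\le m$, identify the surviving alternating tail, get the two middle bounds of (\ref{NEWeq}) from the two-sided alternating-series estimate, get the leftmost bound from $q_{n_k+1}\cdots q_{n_{k+1}}\ge 2^{d_k-1}q_{\us}$, bound every non-peak value by $\frac{1}{2q_\us}$, and compare with the peak lower bound (the paper compares $\frac{1}{2q_\us}\le\frac{1}{q_\us}-\frac{1}{2^{d_k-1}q_\us^2}$ directly, while you route through $\frac{3}{4q_\us}$; same estimate). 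Part (c) is where you genuinely diverge. The paper rewrites $x_S$ using the telescoping identity $\frac{1}{u_{n_{2i-1}}}-\frac{1}{u_{n_{2i}}}=\sum_{j=n_{2i-1}+1}^{n_{2i}}\frac{q_j-1}{u_j}$, observes that the resulting expansion (digits $q_j-1$ on the blocks, zero elsewhere, with infinitely many zero digits) is precisely the canonical $\us$-representation of $x_S$, and concludes $x_S\ne x_{S'}$ from the uniqueness of canonical representations. You instead compare the two series analytically at the first index $i$ of disagreement, bounding one tail below by $\frac{3}{4u_{n_i}}$ (via $u_{n_{i+1}}\ge 2^{d_i}u_{n_i}\ge 4u_{n_i}$) and the other above by $\frac{1}{2u_{n_i}}$. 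Both are sound; yours is more self-contained, avoiding the canonical-representation machinery entirely, at the price of genuinely using the hypothesis $d_i\ge 2$ in (c) (the paper's digit argument only needs the nonzero-digit blocks to be separated, which any positive gap gives), whereas the paper's computation has the side benefit of exhibiting the explicit canonical digits of $x_S$.
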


\begin{proof} 
   (a) Clearly, one has 
% \begin{equation}\label{eqNew1} 
%\frac{1}{q_{n_k}}-\frac{1}{q_{n_k}\cdots q_{n_{k+1}}} \le   
\ $ \lb u_{n_k-1}x_S\rb=\lb \frac{1}{q_{n_k}}-\frac{1}{q_{n_k}\cdots q_{n_{k+1}}}+\frac{1}{q_{n_k}\cdots q_{n_{k+2}}}-\cdots\rb$, 
  %\end{equation}
so 
   \begin{equation}\label{eqNew1.5} 
  \frac{1}{q_{n_k}}-\frac{1}{q_{n_k}\cdots q_{n_{k+1}}} \le   \lb u_{n_k-1}x_S\rb \le  \frac{1}{q_{n_k}}-\frac{1}{q_{n_k}\cdots q_{n_{k+1}}}+\frac{1}{q_{n_k}\cdots q_{n_{k+2}}} 
.
  \end{equation}
  
This proves (\ref{NEWeq}). 

To prove the first assetrtion we need to see that for all values of $n$ satisfying $n_k\le n<n_{k+1}$, the biggest value of $\|u_{n-1}x_S\|$ is obtained for $n=n_k$. Indeed, as $n < n_{k+1}$, one has 
 \begin{equation}\label{eqNew2} 
    \lb u_{n-1}x_S\rb=\lb \frac{1}{q_{n}\cdots q_{n_{k+1}}} -\frac{1}{q_{n}\cdots q_{n_{k+2}}}+\frac{1}{q_{n}\cdots q_{n_{k+3}}}-\cdots\rb \le \frac{1}{q_{n}\cdots q_{n_{k+1}}} 
   \le \frac{1}{2^{n_{k+1} - n} q_\us} \le \frac{1}{2q_\us}.
 \end{equation}
Obviously, $\frac{1}{2q_\us} \le \frac{1}{ q_\us} - \frac{1}{2^{d_{k} -1} q_\us^2 }$. Hence, (\ref{NEWeq})  and (\ref{eqNew2})
yield $\lb u_{n-1}x_S\rb\le\lb u_{n_k-1}x_S\rb$. 

\medskip

(b) Let us first prove that if $0<n < n_{1}$, then $\lb u_{n-1}x_S\rb\le\lb u_{n_1-1}x_S\rb$.  Indeed,
 \begin{equation}\label{eqNew3} 
   \lb u_{n-1}x_S\rb
   \le \frac{1}{2^{n_{1} - n} q_\us} \le \frac{1}{2q_\us} \le \frac{1}{ q_\us} - \frac{1}{2^{d_{1} -1} q_\us^2 } \le\frac{1}{q_{n_1}}-\frac{1}{q_{n_1}\cdots q_{n_{2}}} \le   \lb u_{n_1-1}x_S\rb,
     \end{equation}
     where the first inequality is obtain in analogy with (\ref{eqNew2}), while the last one follows from (\ref{eqNew1.5}) with $k=1$ .  Now the assertion
follows from (a), (\ref{eqNew3}), the definition of $\varrho_\us$ and the obvious inequality $\varrho_\us(x_S,0) \geq \max\{\|x_S\|, \sup\{\lb u_{n_k-1}x_S\rb: k \in \N \} \}$.

(c) It is easy to see that
 \begin{equation}\label{eqxsgen2} 
 x_S=\left( \frac{q_{n_1+1}-1}{u_{n_1+1}}+ \frac{q_{n_1+2}-1}{u_{n_1+2}} + \ldots + \frac{q_{n_2}-1}{u_{n_2}} \right) + \left( \frac{q_{n_3+1}-1}{u_{n_3+1}}
+ \ldots + \frac{q_{n_4}-1}{u_{n_4}}\right) + \ldots
  \end{equation}
Since infinitely many coefficients $c_j =0$ (e.g., those with $n_{2k} < j \leq n_{2k+1}$),  (\ref{eqxsgen2}) is the canonical representation of $x_S$. 
The same holds for $x_{S'}$. It is clear now, that $x_S\ne x_{S'}$ when $S \ne S'$.  \end{proof}

 \medskip\noindent{\bf Proof of Proposition C.} Pick a subsequence $S$ of $S_\us^* =(m_k)$ as in (\ref{eqxsgen1}), such that  $n_1 >m_1$, so that $u_{n_1} \ge 2q_\us$. Hence, 
 \begin{equation}\label{eqNew9} 
 \lb x_S\rb \le  \frac{1}{u_{n_1}} \le  \frac{1}{2q_\us}. 
   \end{equation}

From the previous lemma we have 
 \begin{equation}\label{eqNew} 
 \frac{1}{q_{\us}} - \frac{1}{2^{d_k+1 }}
%\frac{1}{q_{n_k}}-\frac{1}{q_{n_k}\cdots q_{n_{k+1}}} 
\le\lb u_{n_k-1}x_S\rb \le \frac{1}{q_{n_k}}-\frac{1}{q_{n_k}\cdots q_{n_{k+1}}}+\frac{1}{q_{n_k}\cdots q_{n_{k+2}}} \leq 
 \frac{1}{q_{\us}}-\frac{1}{q_\us^{d_k +1 }}+\frac{1}{2^{d_k+ d_{k+1}}}. 
   \end{equation}
   
   If the sequence of differences $\delta_k= m_{k+1}-m_k$ is unbounded, then we can choose 
   $S$ such that $(d_k)$ is divergent, then (\ref{eqNew9}), (\ref{eqNew}) and item (c) of the previous lemma ensure that $\varrho_\us(0,x_S)=\frac{1}{q_\us}$. Since there are $\cc$ many subsequences $S$ of $S_\us$ with this property, this proves item (a) in view of Lemma \ref{Lstima}(c). 

Now assume that the sequence $(\delta_k)$ is bounded.  Pick a subsequence $S$ of $S_\us$ as in (\ref{eqxsgen1}), such that $d_k = n_{k+1}-n_k\le d$ for some $d$ and for all $k$. From the equality in (\ref{eqNew2}) one has 
\begin{align*}\lb u_{n-1}x_S\rb&\le\frac{1}{q_{n_k}}-\frac{1}{q_{n_k}\cdots q_{n_{k+1}}}+\frac{1}{q_{n_k}\cdots q_{n_{k+2}}} = \frac{1}{q_\us} - \frac{1}{q_{n_k}\cdots q_{n_{k+1}}}\left(1-\frac{1}{q_{n_{k+1}+1}\cdots q_{n_{k+2}}}\right)\le
\\
 &\le \frac{1}{q_\us} - \frac{1}{q_\us^{d_k +1}}\left(1-\frac{1}{2}\right) 
  \le\frac{1}{q_\us} - \frac{1}{ 2 q_\us^{d} } < \frac{1}{q_\us}
%
% &=\frac{1}{q_{n_k}}\left(1-\frac{1}{q_{n_k+1}\cdots q_{n_{k+1}}}+\frac{1}{q_{n_k+1}\cdots q_{n_{k+2}}}\right)\\
% &=\frac{1}{q_{n_k}}\left(1-\frac{1}{q_{n_k+1}\cdots q_{n_{k+1}}}\left(1-\frac{1}{q_{n_{k+1}+1}\cdots q_{n_{k+2}}}\right)\right)\\
% &\le\frac{1}{C}\left(1-\frac{1}{C^{d_k}}\left(1-\frac{1}{C}\right)\right)\\
% &\le\frac{1}{C}\left(1-\frac{1}{C^{d}}\left(1-\frac{1}{C}\right)\right) =\frac{1}{C}-\frac{1}{C^{d+1}}+\frac{1}{C^{d+2}}< \frac{1}{C}.
\end{align*}

Hence, from (\ref{eqNew9}) and  in view of Lemma \ref{Lstima}(b) one can deduce $x_S\in B_{\frac{1}{q_\us}}^{\varrho_\us}(0)$.  To get $\left|B_{\frac{1}{q_\us}}^{\varrho_\us}(0)\right|=\cc$ take $\cc$ many subsequences $S$ of $S_\us$ as above and use item (c) of the above lemma. 
 \hfill$\Box$\medskip

 The following remark shows that the implications of Theorem \ref{TEgg} cannot be inverted 

\begin{rem}\label{B-M-W} \cite{BDMW1} If $H$ is an infinite characterized subgroup of $\T$, with characterizing sequence $\us\in\Z^\N$, then for every 
$m\in\N$ one can find a strictly increasing sequence $\vs \sim \us$ such that $q_\vs=\infty$ and $\liminf_n\frac{v_{n+1}}{v_n}=m$. 
%Let $\us\in\Z^\N$ be strictly increasing and . Then there is a such that $\tTu=\tus(\T)$, $\limsup_n\frac{u_{n+1}^*}{u_n^*}=\infty$ and $\liminf_n\frac{u_{n+1}^*}{u_n^*}=m$ (see \cite{BDMW1} for a proof). 
\end{rem}

This remark shows that the properties of having bounded ratios, or having ratios converging to $\infty$ are not $\sim$-invariant. Actually, if one takes a sequence $\us$ with bounded ratios, and then 
a sequence $\us^*\sim \us$ as in the remark, then one will have also $\us^*\approx \us$, according to Corollary A1, while $\us^*$ will fail to have the property of having bounded ratios.

% -------------------------------------------------------------------------------------------

\subsubsection{The torsion subgroup of $\tTu$}\label{Stors}
      
The polishability of $\tTu$ holds for a general integer sequence, but in the case of an a-sequence we can produce a simply defined dense 
countable subgroup witnessing the separability of $\tTu$. If $\us$ is an a-sequence, it makes sense to consider the subgroup $\ttTu$ of $\tTu$ formed by all $x\in\T$  with $|supp_\us(x)|<\infty$. We prove in Proposition \ref{Ptoraseq} that $\ttTu=t(\tTu)$ when $\us$ is an a-sequence and $t(\tTu)$ is the torsion subgroup of $\tTu$. \NB In other words, $supp_\us(x)$ is infinite for a torsion element $x$ of $\T$ if and only if $x\not \in \tu(\T)$. 
	
	\begin{prop}\label{Ptoraseq}  If $\us$ is an a-sequence, then $\ttTu=t(\tTu)$. \end{prop}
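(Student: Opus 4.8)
The plan is to establish the two inclusions $\ttTu\subseteq t(\tTu)$ and $t(\tTu)\subseteq\ttTu$ separately, the engine being a dictionary between finite $\us$-support and divisibility. Write a nonzero torsion element of $\T$ as $x=a/b$ in lowest terms ($\gcd(a,b)=1$, $2\le b$, $0<a<b$; the element $0$ has empty support and lies trivially in both groups). I would first record that $x$ has $|\SUX|<\infty$ if and only if $x\in\frac{1}{u_N}\Z/\Z$ for some $N$, equivalently $b\mid u_N$ for some $N$. Indeed, if the canonical representation (\ref{canonical}) of $x$ is supported in $\{n\le N\}$, then $u_Nx=\sum_{n\le N}c_n\frac{u_N}{u_n}\in\Z$ because $u_n\mid u_N$ for all $n\le N$ — the one place where the a-sequence hypothesis enters. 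Conversely, every $x=k/u_N$ with $0\le k<u_N$ has a terminating mixed-radix expansion with digits $0\le c_n<q_n$, and this is the canonical one since $q_n\ge2$ (strict monotonicity) forces $c_n=0<q_n-1$ for all $n>N$. Hence $\ttTu=\bigcup_N\frac{1}{u_N}\Z/\Z$ is an increasing union of finite cyclic groups, confirming in passing that it is a subgroup of $\Q/\Z$.

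For $\ttTu\subseteq t(\tTu)$: if $x$ has finite support then $b\mid u_N$ for some $N$, hence $b\mid u_m$ and $u_mx=0$ in $\T$ for every $m\ge N$; thus $u_mx\to0$, so $x\in\tTu$, while $x$ is visibly torsion.

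For the reverse inclusion $t(\tTu)\subseteq\ttTu$ I would argue by contraposition: let $x=a/b$ be torsion with $|\SUX|=\infty$, so $b\nmid u_N$ for every $N$. As $\gcd(a,b)=1$ this yields $b\nmid u_Na$, so $u_Nx=(u_Na\bmod b)/b$ is a nonzero element of $\frac{1}{b}\Z/\Z$, whence $\lb u_Nx\rb\ge\frac{1}{b}$ for every $N$. Consequently $u_Nx\not\to0$, i.e. $x\notin\tTu$, so every torsion element of $\tTu$ has finite support. Combining the two inclusions gives $\ttTu=t(\tTu)$.

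The arithmetic is elementary, so I expect the only genuine (and modest) obstacle to be the clean justification of the dictionary in the first paragraph — namely that the canonical expansion of a rational $k/u_N$ terminates and is the canonical representation rather than a competing non-terminating one with an eventual $(q_n-1)$-tail. Once this is settled, the uniform bound $\lb u_Nx\rb\ge\frac{1}{b}$ is immediate. Alternatively one could derive $x\notin\tTu$ from Corollary \ref{unbounded**} or Theorem \ref{bounded}, but the direct estimate seems shortest.
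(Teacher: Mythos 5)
Your proof is correct and follows essentially the same route as the paper's: both hinge on the finite mixed-radix expansion of $k/u_N$ (the one place the a-sequence hypothesis enters) and on the observation that for $x=a/b$ in lowest terms $\lb u_n x\rb$ is either $0$ or at least $\frac{1}{b}$, so that coprimality converts smallness of $\lb u_n x\rb$ into the divisibility $b\mid u_n$. The only difference is organizational: you package these facts as a dictionary and prove $t(\tTu)\subseteq\ttTu$ by contraposition, whereas the paper argues directly from the convergence $u_nx\to0$.
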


	\begin{proof}  The inclusion $\ttTu\subseteq t(\tTu)$ is obvious. Let $x=\frac{a}{b}\in t(\tTu)=\tTu\cap\Q/\Z$, where $a,b$ are coprime  and $a<b$. Hence, there exists $m\in\N$ such that $\forall n\ge m,\lb u_n x\rb<\frac{1}{b}$. In particular,
	 \begin{equation*}
	  \lb u_{m}\frac{a}{b}\rb<\frac{1}{b}\Leftrightarrow\lb u_{m}\frac{a}{b}\rb=0\Leftrightarrow b|u_ma.
	 \end{equation*}
 As $a$ and $b$ are coprime, we deduce that $u_m=lb$ for some $l\in\N$ and hence $x=\frac{a}{b}=\frac{al}{u_m}$. Hence, we have to show that elements of the form $x=\frac{k}{u_m}$ have a finite canonical representation. Obviously, we can assume that $k < u_m$. Hence, using the sequence $q_m = \frac{u_m}{u_{m-1}}, q_mq_{m-1} = \frac{u_m}{u_{m-2}}, \ldots,   q_mq_{m-1}\ldots q_{2} = \frac{u_m}{u_1}$ we can find integer coefficients $c_m, \ldots, c_1$ such that 
 \begin{equation}\label{eq2}
0\leq c_j < q_j \ \mbox{ for } \ 0 \leq j \leq m. 
 \end{equation}
and
 \begin{equation}\label{eq1}
k = c_m +  c_{m-1}  \frac{u_m}{u_{m-1}} +  c_{m-2} \frac{u_m}{u_{m-2}} + \ldots +  c_1 \frac{u_m}{u_{1}}
 \end{equation}
Dividing (\ref{eq1}) by $u_m$ we get 
 \begin{equation*}
\frac{k}{u_{m}}  =  \frac{c_m}{u_{m}} + \frac{c_{m-1}}{u_{m-1}} +   \frac{c_{m-2}}{u_{m-2}} + \ldots +   \frac{c_1}{u_{1}}.
 \end{equation*}
This provides a  finite canonical representation of $x = \frac{k}{u_{m}}$ in view of (\ref{eq2}). Therefore, $x\in \ttTu$. 
 	\end{proof}
		
\begin{notation}	
If $\PP$ denotes the set of all prime numbers, then for every $p\in\PP$, $\Z(p^\infty)$ and $\Z(p^n)$ denote respectively the Pr\"ufer $p$-group and the cyclic group of order $p^n$ where $n\in\N$. For $\us\in\Z^\N$ and $p\in\PP$, let $v_p$ be the additive $p$-adic valuation and
	\begin{equation*}
	 n_p(\us):=\liminf_{n\to\infty}{v_p(u_n)}\in\N\cup\{\infty\}.
	\end{equation*}

\end{notation}	

In  the following proposition  the torsion subgroup of the torus $t(\T)=\Q/\Z$ is identified with $\bigoplus_{p\in\PP}\Z(p^\infty)$.
	
\begin{prop}[\cite{BDMW}]\label{pTorStruc}
For $\us\in\Z^\N$ one has that \[t(\tTu)=\bigoplus_{p\in\PP}\Z(p^{n_p(\us)}).\]
\end{prop}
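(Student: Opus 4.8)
The plan is to characterize precisely which torsion elements of $\T$ lie in $\tTu$, and then to match that description against the primary decomposition $t(\T)=\Q/\Z=\bigoplus_{p\in\PP}\Z(p^\infty)$.

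First I would fix a torsion element $x=\frac{a}{b}\in\Q/\Z$ with $\gcd(a,b)=1$ and $0\le a<b$, and observe that every term $u_nx$ lies in the finite cyclic subgroup $\frac{1}{b}\Z/\Z$ of $\T$. Since the finitely many nonzero elements of this subgroup are bounded away from $0$, the convergence $u_nx\to 0$ in $\T$ forces $u_nx=0$ for almost all $n$, that is, $b\mid u_na$ for almost all $n$; by coprimality of $a$ and $b$ this is equivalent to $b\mid u_n$ for almost all $n$. This yields the clean criterion that $x\in\tTu$ if and only if $b\mid u_n$ for almost all $n$.

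Next I would pass to $p$-adic valuations. As $b$ has only finitely many prime divisors, the condition ``$b\mid u_n$ for almost all $n$'' is equivalent to: for each prime $p$, one has $v_p(b)\le v_p(u_n)$ for almost all $n$ (intersecting finitely many cofinite sets). Using that $(v_p(u_n))_n$ takes values in $\N\cup\{\infty\}$ and that for an integer-valued sequence $\liminf_n a_n\ge c$ holds exactly when $a_n\ge c$ for almost all $n$, this becomes $v_p(b)\le\liminf_n v_p(u_n)=n_p(\us)$ for every $p$, the case $n_p(\us)=\infty$ being automatic. Hence $x\in t(\tTu)$ if and only if $v_p(b)\le n_p(\us)$ for all $p\in\PP$.

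Finally I would translate this through the primary decomposition. Writing $x=\sum_{p\mid b}x_p$ with $x_p\in\Z(p^\infty)$ of order $p^{v_p(b)}$, the inequality $v_p(b)\le n_p(\us)$ is exactly the assertion $p^{n_p(\us)}x_p=0$, i.e. $x_p\in\Z(p^{n_p(\us)})$. Collecting these conditions over all $p$ gives $t(\tTu)=\bigoplus_{p\in\PP}\Z(p^{n_p(\us)})$, as claimed. The only point demanding genuine care is the reduction of the ``almost all $n$'' divisibility to the $\liminf$ condition, including the infinite-valuation case; the identification with the direct sum via primary decomposition is then routine.
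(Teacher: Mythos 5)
Your proof is correct, but there is nothing in the paper to compare it against: the paper states this proposition as an imported result, cited directly from \cite{BDMW}, and gives no proof of it at all. Your argument is a sound, self-contained replacement. The three pivots all check out: (i) since $u_nx$ ranges in the finite subgroup $\frac{1}{b}\Z/\Z$, whose nonzero elements have norm at least $\frac{1}{b}$, convergence $u_nx\to 0$ indeed forces $b\mid u_n$ eventually (this is precisely the device the paper itself uses, for a-sequences, in its proof of Proposition \ref{Ptoraseq}, so your first step is very much in the spirit of the text); (ii) the passage from ``$b\mid u_n$ for almost all $n$'' to ``$v_p(b)\le n_p(\us)$ for all $p$'' correctly exploits both the finiteness of the set of prime divisors of $b$ (finitely many cofinite sets intersect in a cofinite set) and the integrality of the valuations, without which $\liminf_n v_p(u_n)\ge v_p(b)$ would only give $v_p(u_n)\ge v_p(b)-\varepsilon$ eventually; and (iii) the identification of $\{x\in\Q/\Z: v_p(b)\le n_p(\us)\ \forall p\}$ with $\bigoplus_{p\in\PP}\Z(p^{n_p(\us)})$ via primary components, including the case $n_p(\us)=\infty$, is routine as you say. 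One presentational remark: it is worth stating explicitly at the outset that $t(\tTu)=\tTu\cap\Q/\Z$, i.e.\ that the torsion elements of the subgroup are exactly its rational points, since that is what licenses restricting attention to $x=\frac{a}{b}$; but this is immediate from $t(\T)=\Q/\Z$ and does not affect correctness.
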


\medskip\noindent{\bf Proof of Proposition D.} 
We have to prove that the torsion subgroup $t(\tTu)$ of $\tTu$ is a dense subgroup of $(\tTu,\tau_\varrho)$ when $\us$ is an a-sequence. 
By Proposition \ref{Ptoraseq}, we have to prove that $\ttTu$ is a dense subgroup of $(\tTu,\tau_\varrho)$.
 
By Theorem \ref{bounded}, 
$\ttTu=\tTu$, when $q_\us<\infty$. Hence, there is nothing to prove in this case. 

Suppose that $q_\us=\infty$, hence there exists a subsequence $q_{n_k}\to\infty$. Let $x\in\tTu$ with canonical representation (\ref{canonical}) and $\ep>0$. It suffices to find $x'\in\ttTu$ such that $x-x'\in B_\ep^\varrho(0)$.

Let $k^*\in\N$ such that:
\begin{itemize}
 \item[(i)] $\forall k\ge k^*,q_{n_k}>\frac{2}{\ep}$;
 \item[(ii)]$\forall n\ge n_{k^*}, \lb u_nx\rb< \frac{\ep}{2}$.
\end{itemize}
If 
\begin{equation*}
 x'=\sum_{n\le n_{k^*}}\frac{c_n}{u_n} \quad \text{ and } \quad x''=\sum_{n>n_{k^*}}\frac{c_n}{u_n},
\end{equation*}
then $x=x'+x''$ and $x'\in\ttTu$. Hence it is enough to prove that
% $x''\in B_\ep^\varrho(0)$, i.e., 
$\lb u_sx''\rb<\ep$ holds for all $s\in\N$.

If $s\ge n_{k^*}$, then $\lb u_s x''\rb=\lb u_s(x-x')\rb\le\lb u_sx\rb+\lb u_s x'\rb<\ep+0$.

If $s<n_{k^*}$, then
\begin{align*}
 \lb u_sx''\rb\le u_sx''&=\sum_{n>n_{k^*}}\frac{c_nu_s}{u_n}\\&=\frac{1}{q_{s+1}\cdots q_{n_{k^*}}}\left(\frac{c_{n_{k^*}+1}}{q_{n_{k^*}+1}}+\cdots+\frac{c_{n_{k^*}+t}}{q_{n_{k^*}+1}\cdots q_{n_{k^*}+t}}+\cdots\right).
\end{align*}
Since $(c_n)$ is a canonical representation, $\left(\frac{c_{n_{k^*}+1}}{q_{n_{k^*}+1}}+\cdots+\frac{c_{n_{k^*}+t}}{q_{n_{k^*}+1}\cdots q_{n_{k^*}+t}}+\cdots\right)<1$ holds, according to Remark \ref{NewRem}. By (i), $\frac{1}{q_{s+1}\cdots q_{n_{k^*}}}<\frac{\ep}{2}$ and hence $\lb u_sx''\rb<\ep$.
\hfill$\Box$\medskip

%%%%%%%%%%%%%%%%%%%%%%%%%%%%%%%%%%%%%
% ------------------  P r o o f   o f   T h e o r e m s   D   a n d  E   ------------------
%%%%%%%%%%%%%%%%%%%%%%%%%%%%%%%%%%%%%%

\subsubsection{Proof of Theorem E}

\begin{prop}\label{LemmaX} If $\us$ is an a-sequence with $ \tTu\in\tau_{\us}$, then the sequence of ratios $(q_n)$ is bounded. 
\end{prop}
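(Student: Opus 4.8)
The plan is to prove the contrapositive: assuming the ratio sequence $(q_n)$ is \emph{unbounded}, I will show that $\tTu$ is \emph{not} $\tau_\us$-open, i.e.\ that for every $\ep>0$ the ball $B_\ep^{\varrho_\us}(0)$ contains a point lying outside $\tTu$. Fix such an $\ep$, which we may assume satisfies $\ep<\frac12$ (by Remark \ref{rBalls} a ball of radius $\geq\frac12$ is all of $\T$). Exploiting unboundedness, I would first choose a strictly increasing sequence of indices $n_1<n_2<\cdots$ along which $q_{n_k}\to\infty$ and $q_{n_k}>4/\ep$ for all $k$.

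The witnessing element $x$ will be specified directly through its canonical representation with respect to $\us$. For each $k$, since $q_{n_k}>4/\ep$ the interval $[\tfrac{\ep}{4}q_{n_k},\tfrac{\ep}{2}q_{n_k}]$ contains an integer $c_{n_k}$, so that $\frac{\ep}{4}\le\frac{c_{n_k}}{q_{n_k}}\le\frac{\ep}{2}$; I then set $c_n=0$ for every $n\notin\{n_k\}$ and let $x=\sum_k c_{n_k}/u_{n_k}$. Since $0\le c_{n_k}<q_{n_k}$ and infinitely many coefficients vanish, this is a legitimate canonical representation with $\SUX=\{n_k:k\in\N\}$.

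The two verifications then run as follows. To see $x\notin\tTu$: the support $\SUX$ is $\us$-divergent because $q_{n_k}\to\infty$, while $\lb \frac{c_{n_k}}{q_{n_k}}\rb\ge\frac{\ep}{4}$ (as $\frac{c_{n_k}}{q_{n_k}}\le\frac{\ep}{2}<\frac14$) shows that $\frac{c_n}{q_n}$ does not converge to $0$ in $\T$ along the support; hence Corollary \ref{unbounded**} gives $x\notin\tTu$. To see $x\in B_\ep^{\varrho_\us}(0)$, I would compute $u_mx$ modulo $1$ from the canonical representation: for $m$ with $n_k\le m<n_{k+1}$ (and similarly for $m<n_1$) the integer part coming from the terms with $n\le m$ drops out, leaving
$$\{u_mx\}=\frac{1}{q_{m+1}\cdots q_{n_{k+1}-1}}\left(\frac{c_{n_{k+1}}}{q_{n_{k+1}}}+\frac{c_{n_{k+2}}}{q_{n_{k+1}}\cdots q_{n_{k+2}}}+\cdots\right).$$
The bracket is at most $\frac{\ep}{2}+\frac{\ep}{q_{n_{k+1}}}\le\frac{3\ep}{4}$ (the leading term is $\le\frac{\ep}{2}$, and a routine geometric estimate using $q_n\ge 2$ bounds the tail by $\frac{\ep}{q_{n_{k+1}}}\le\frac{\ep}{4}$), while the prefactor is $\le 1$; hence $\|u_mx\|=\{u_mx\}\le\frac{3\ep}{4}$ (the value staying below $\frac12$). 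Together with $\|x\|=x<\ep$, this yields $\varrho_\us(x,0)\le\frac{3\ep}{4}<\ep$, so $x\in B_\ep^{\varrho_\us}(0)\setminus\tTu$ and $\tTu$ is not $\tau_\us$-open.

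The delicate point — and exactly where unboundedness is used — is reconciling the two demands: keeping $x$ outside $\tTu$ forces $\frac{c_{n_k}}{q_{n_k}}$ to stay bounded away from $0$, yet $\frac{c_{n_{k+1}}}{q_{n_{k+1}}}$ is the \emph{dominant} contribution to $\|u_{n_{k+1}-1}x\|$ and so must also remain below $\ep$. These are compatible only because I can pin $\frac{c_{n_k}}{q_{n_k}}$ inside the fixed window $[\ep/4,\ep/2]$, and finding an \emph{integer} $c_{n_k}$ in that window is precisely what the largeness $q_{n_k}>4/\ep$ — i.e.\ the unboundedness of $(q_n)$ — guarantees. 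Everything else reduces to the geometric-series estimates above, all justified by $q_n\ge 2$.
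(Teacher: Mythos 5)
Your proof is correct and follows essentially the same route as the paper's: both construct a witness $x\in B_\ep^{\varrho_\us}(0)\setminus\tTu$ via its canonical representation, supported on a subsequence $(n_k)$ with $q_{n_k}\to\infty$ and with coefficients $c_{n_k}$ of size about $\ep q_{n_k}/4$, then use the geometric-series estimate (from $q_n\ge 2$) for membership in the ball and Corollary \ref{unbounded**} for non-membership in $\tTu$. The only differences are cosmetic: contrapositive instead of contradiction, pinning $c_{n_k}/q_{n_k}$ in the window $[\ep/4,\ep/2]$ rather than taking $c_{n_k}=\lfloor q_{n_k}\ep/4\rfloor$, and a final bound of $3\ep/4$ instead of the paper's $\ep/2$.
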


\begin{proof}
  Assume for a contradiction that
 \begin{equation}\label{assmption}
\lim _t q_{n_t} =\infty
 \end{equation}
for some subsequence \NB $\{q_{n_t}:t\in \N\}$. We can assume without loss of generality that $q_{n_t}\geq 2$ for all $t$. We prove that $ \tTu\notin\tau_{\us}$. 
To do that we prove that  $B_\ep^\varrho(0)\nsubseteq\tTu$ for every $\ep< \frac{1}{2}$. Hence we need an element $x\in B_\ep^\varrho(0)$ such that $x\notin\tTu$.

Let 
 \begin{equation}\label{(dag)}
x=\sum_{n}\frac{c_n}{u_n};
 \end{equation}
 where $c_n=0$ if $n\neq n_t$ for every $t$ otherwise if $n=n_t$ $c_n=\lfloor q_n\ep/4\rfloor$. Note that (\ref{(dag)}) is a canonical representation of $x$, as 
 \begin{equation}\label{LAST}
c_{n_t}\le \frac{q_{n_t}\ep}{4} <\frac{q_{n_t}}{8} <q_{n_t}-1
 \end{equation}
for every $t$.

Let us prove that $x\in B^\varrho_{\varepsilon}(0)$. Pick $s\in\N$ and find a  positive $r\in\N$ such that $n_{r-1}\le s\le n_{r}-1$.

Hence, one has the following congruence modulo 1
$$ 
u_sx \equiv_1 \frac{c_{s+1}}{q_{s+1}}+\frac{c_{s+2}}{q_{s+1}q_{s+2}}+\cdots+\frac{c_{n_r-1}}{q_{s+1}\cdots q_{n_r-1}}+\frac{c_{n_r}}{q_{s+1}\cdots q_{n_r}}+\cdots.$$

By construction $c_{s+1}=\cdots=c_{n_r-1}=0$ and $c_{n_j}\le \frac{q_{n_j}\ep}{4}$ 
for all $j \geq r$, according to (\ref{LAST}).  Therefore, $\frac{c_{n_j}}{q_{n_j}} \leq \frac{\ep}{4}$, hence $\frac{c_{n_j}}{q_{s+1}\cdots q_{n_j}}\leq \frac{\ep}{2}\cdot \frac{1}{2^{n_j-s}}$. This yields
$$ 
\lb u_sx\rb= \lb\frac{c_{n_r}}{q_{s+1}\cdots q_{n_r}}+\cdots\rb\le\frac{\ep}{2}\sum_{j=r}^\infty\frac{1}{2^{n_j-s}}\le\frac{\ep}{2}\sum_{m=1}^\infty\frac{1}{2^{m}}=\frac{\ep}{2}.$$

Hence for every $s$ one has $\lb u_sx\rb\le\frac{\ep}{2}<\ep$, i.e., $x\in B^\varrho_{\varepsilon}(0)$.

\smallskip 

 As $ \frac{q_{n_t}\ep}{4} -1\le c_{n_t}\le \frac{q_{n_t}\ep}{4},$ the following inequality holds  $$\frac{\ep}{4}-\frac{1}{q_{n_t}}\le\frac{c_{n_t}}{q_{n_t}}\le\frac{\ep}{4}.$$ 

Therefore, (\ref{assmption}) implies that $\frac{c_{n_t}}{q_{n_t}}$ converges to $\frac{\ep}{4}\neq 0$. By Corollary \ref{unbounded**}, this yields $x\notin\tTu$. \end{proof}

 The above proposition fails  in the case of a general sequence of integers. Indeed, if $H$ is a countable subgroup of $\T$, then 
 $H$ is characterized by some sequence $\us$ that need not have bounded ratios (in case  $\us$ has 
 bounded ratios, we can replace $\us$ by some $\us^*\sim \us$ with $q_{\us^*}= \infty$, according to 
 Remark \ref{B-M-W}). 
 
% using the procedure of Remark \ref{B-M-W}, any $q$-bounded a-sequence can be turned on an unbounded sequence (that is necessarily not arithmetic, see Remark \ref{rIrr}) with the same characterized subgroup. The new sequence is the desired counterexample.
%\end{rem}

%\medskip\noindent{\bf Proof of Theorem E.}
%(i)$\Leftrightarrow$(ii) since, $\tTu$ is Borel;
%
%(ii)$\Leftrightarrow$(iii) is Corollary C2 for $X=\T$;
%
%(ii)$\Leftrightarrow$(iv) by Theorem D.
%
%Moreover if $\left(\frac{u_n}{u_{n-1}}\right)$ is bounded, then by Theorem \ref{TEgg} $\tTu$ is countable.

%\hfill$\Box$\medskip

\medskip\noindent{\bf Proof of Theorem F.}  The implication (a)$\Rightarrow$(b) follows by Theorem \ref{bounded}; while (b)$\Rightarrow$(c)$\Rightarrow$(d) are obvious.

(d)$\Rightarrow$(e) follows by Theorem A, 

(e)$\Rightarrow$(f) follows from  the fact that $\tau_\us^*\subseteq\tau_\us$;

(f)$\Rightarrow$(a) follows from  Proposition \ref{LemmaX}. 
\hfill$\Box$\medskip
%%%%%%%%%%%%%%%%%%%%%%%%%%%%%%%%%%%%%%%%%%%%%%%%%%%%%%%%%%%%%%%%%%%%%%%%%%%%%%%%%%%%%%%%%%%%%%%%%%%%%%%%%%%%%%%%%%

\end{document}